\newcommand{\itref}[1]{\textup{(\ref{#1})}}
\newcommand{\hyphen}{-\nobreak\hskip 0pt plus 0pt}
\newcommand{\Lie}[1]{\mathrm{#1}}
\newcommand{\lie}[1]{\mathfrak{#1}}
\newcommand{\GL}{\Lie{GL}}
\newcommand{\gl}{\lie{gl}}
\newcommand{\sln}{\lie{sl}}
\newcommand{\Sl}{\sln}
\newcommand{\SO}{\Lie{SO}}
\newcommand{\SP}{\Lie{Sp}}
\newcommand{\su}{\lie{su}}
\newcommand{\un}{\lie u}
\newcommand{\bmf}{\lie b}
\newcommand{\kf}{\lie k}
\newcommand{\n}{\lie n}
\newcommand{\tf}{\lie t}
\newcommand{\SU}{\Lie{SU}}
\newcommand{\Un}{\Lie{U}}
\newcommand{\bC}{\mathbb C}
\newcommand{\bH}{\mathbb H}
\newcommand{\bK}{\mathbb K}
\newcommand{\bL}{\mathbb L}
\newcommand{\bN}{\mathbb N}
\newcommand{\bR}{\mathbb R}
\newcommand{\bZ}{\mathbb Z}
\newcommand{\C}{\bC}
\newcommand{\R}{\bR}
\newcommand{\HH}{\bH}
\newcommand{\Z}{\bZ}
\newcommand{\any}{{\,\cdot\,}}
\newcommand{\Hodge}{{*}}
\newcommand{\hook}{{\lrcorner}}
\newcommand{\symp}{{/\mkern-3mu/}}
\newcommand{\hkq}{{/\mkern-3mu/\mkern-3mu/}}
\newcommand{\Mcut}{M_{\mathrm{cut}}}
\newcommand{\Mmod}{M_{\mathrm{mod}}}
\DeclarePairedDelimiter{\abs}{\lvert}{\rvert}
\DeclarePairedDelimiterX{\inp}[2]{\langle}{\rangle}{#1, #2}
\DeclarePairedDelimiter{\norm}{\lVert}{\rVert}
\newcommand{\with}{}
\newcommand{\SetSymbol}[1][]{\nonscript\:#1\vert
  \allowbreak\nonscript\:\mathopen{}}
\DeclarePairedDelimiterX{\Set}[1]{\{}{\}}{%
  \renewcommand{\with}{\SetSymbol[\delimsize]}
#1 }
\DeclareMathOperator{\diag}{diag}
\DeclareMathOperator{\Hom}{Hom}
\DeclareMathOperator{\imp}{Im} 
\DeclareMathOperator{\imm}{im} 
\DeclareMathOperator{\Mor}{Mor}
\DeclareMathOperator{\nLie}{Lie}
\DeclareMathOperator{\Span}{Span}
\DeclareMathOperator{\stab}{stab}
\newcommand{\iH}{\imp\bH}
\newcommand{\pr}{\mathrm{pr}}
\begin{document}

\title*{Hypertoric manifolds and hyperKähler moment maps}

\author{Andrew Dancer and Andrew Swann}

\institute{Andrew Dancer \at Jesus College, Oxford, OX1 3DW, United
Kingdom \email{dancer@maths.ox.ac.uk} \and Andrew Swann \at Department
of Mathematics, Aarhus University, Ny Munkegade 118, Bldg 1530,
DK-8000 Aarhus C, Denmark \email{swann@math.au.dk}}

\maketitle

\begin{center}
  \emph{To Simon Salamon on the occasion of his 60th birthday}
\end{center}

\abstract{ We discuss various aspects of moment map geometry in
symplectic and hyperK\"ahler geometry. In particular,
we classify complete hyperKähler manifolds of dimension
\( 4n \) with a tri-Hamiltonian action of a torus of
dimension~\( n \), without any assumption on the finiteness of the
Betti numbers.  As a result we find that the hyperKähler moment in
these cases has connected fibres, a property that is true for
symplectic moment maps, and is surjective. New examples
of hypertoric manifolds of infinite topological type are produced.
 We provide examples of
non-Abelian tri-Hamiltonian group actions of connected groups on
complete hyperKähler manifolds such that the hyperKähler moment map is
not surjective and has some fibres that are not connected.  We also
discuss relationships to symplectic cuts, hyperKähler modifications
and implosion constructions.}

\section{Introduction}
\label{sec:introduction}

A symplectic structure on a (necessarily even-dimensional) manifold is
a closed non-degenerate two-form.  Several Riemannian and
pseudo-Riemannian geometries have been developed over the years which
give rise to a symplectic structure as part of their data.  The most
famous example is that of a \emph{hyperK\"ahler structure}, where we
have a Riemannian metric $g$ and complex structures $I$, $J$, $K$
obeying the quaternionic multiplication relations, and such that $g$
is K\"ahler with respect to $I,J,K$. We therefore obtain a triple
$(\omega_I, \omega_J, \omega_K)$ of symplectic forms.

One of the foundational results of symplectic geometry is the Darboux
Theorem, which says that locally a symplectic structure can be put
into a standard form $\omega = \sum_{i=1}^{n} dp_i \wedge dq_i$.  Many
of the interesting questions in symplectic geometry are therefore
global in nature, giving the subject a more topological flavour.

Geometries involving a metric of course do not have a Darboux-type
theorem, because the metric contains local information through its
curvature tensor.  However, there is one area of symplectic geometry,
that concerning \emph{moment maps} where a rich theory has been
developed for other geometries by analogy with the symplectic
situation.  In this paper we shall discuss some aspects of this,
especially related to hypertoric manifolds, cutting and implosion.

\section{Hypertoric manifolds}
\label{sec:toric-manifolds}

Let \( M \) be a hyperKähler manifold \( M \) of dimension~\( 4n \).
We say that an action of a group \( G \) on~\( M \) is
\emph{tri-symplectic} if it preserves each of the symplectic forms
\( \omega_I \), \( \omega_J \) and \( \omega_K \).  This is equivalent
to \( G \) preserving both the metric \( g \) and each of the
associated complex structures \( I \), \( J \) and \( K \); so the
action is isometric and \emph{tri-holomorphic}.
We will usually assume that \( G \) is connected and that the action
is effective.

Because hyperKähler metrics are Ricci-flat, we have that if \( M \) is
compact, then any Killing field~\( X \) is parallel and so \( G \) is
Abelian.  As the complex structures are also parallel the distribution
\( \bH X = \Span_{\bR}\Set{X,IX,JX,KX} \) is integrable and flat.  Up
to finite covers, an \( M \) is a product \( T^{4m} \times M_0 \),
with \( G \) acting trivially on \( M_0 \).

Thus the interesting cases are when \( M \) is non-compact.  From the
Riemannian perspective is now natural to consider complete metrics.
Note that by Alekseevski\u\i\ \& Kimel'fel'd
\cite{Alekseevskii-K:Ricci}, any homogeneous hyperKähler manifold is
flat; such a manifold is necessarily complete, so its universal cover
is~\( \bR^{4n} \) with the flat metric.  Thus one should consider
actions on~\( M \) with orbits of dimension strictly less than~\( 4n \).

One says that a tri-holomorphic action of \( G \) on~\( M \) is
\emph{tri-Hamiltonian} if it is Hamiltonian for each symplectic
structure, meaning that there are equivariant moment maps
\begin{gather}
  \mu_I,\mu_J,\mu_K \colon M \to \lie g^*,\\
  \label{eq:hook}
  d\mu_A^X = X \hook \omega_A,
\end{gather}
where \( \mu_A^X = \inp{\mu_A}X \).  Here we write \( X \) both for
the element of~\( \lie g \) and the corresponding vector field
\( x\mapsto X_x \) on~\( M \).  Also \( \inp \alpha X = \alpha(X) \)
is the pairing between \( \lie g^* \) and \( \lie g \).

As each \( X \in \lie g \) preserves \( \omega_A \), we have
\( 0 = L_X\omega_A = X \hook d\omega_A + d(X\hook\omega_A) =
d(X\hook\omega_A) \), so \( X\hook\omega_A \) is exact.  Thus if
\( M \) is simply-connected then, equation~\eqref{eq:hook} has a
solution \( \mu_A^X \in C^\infty(M) \) that is unique up to an
additive constant.

\subsection{Abelian actions}
\label{sec:abelian-actions}

For an Abelian group~\( G \), equivariance of \( \mu_A \) is the just
the condition \( L_X\mu_A^Y = 0 \) for each \( X,Y \in \lie g \).  But
\( L_X\mu_A^Y = X\hook d\mu_A^Y = - \omega_A(X,Y) = - g(AX,Y) \) and
\( d(\omega_A(X,Y)) = L_Y(X\hook\omega_A) = 0 \).  So \( L_X\mu_A^Y \)
is constant and the action is tri-Hamiltonian only if for each~\( A \)
we have \( \mathcal G \bot A\mathcal G \), where
\( \mathcal G_x = \Set{X_x \with X \in \lie g} \subset T_xM \).  This
last condition is equivalent to
\( \dim \bH \mathcal G_x = 4\dim \mathcal G_x \) for each
\( x \in M \).

\begin{proposition}
  Suppose \( G \) is a connected Abelian group that has an effective
  tri-Hamiltonian action on a connected hyperKähler manifold~\( M \)
  of dimension~\( 4n \).  Then the dimension of \( G \) is at
  most~\( n \).
\end{proposition}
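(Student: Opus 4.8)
The plan is to combine the pointwise algebraic constraint already recorded before the statement with a genericity argument producing a point at which \( G \) acts with discrete stabiliser. At every \( x \in M \) the tri-Hamiltonian hypothesis forces \( \mathcal G_x \bot A\mathcal G_x \) for \( A = I,J,K \), equivalently \( \dim \bH \mathcal G_x = 4\dim \mathcal G_x \). Since \( \bH \mathcal G_x \) is a subspace of \( T_xM \) and \( \dim T_xM = 4n \), this yields \( 4\dim \mathcal G_x \le 4n \), that is \( \dim \mathcal G_x \le n \) for every \( x \). It therefore suffices to exhibit a single point at which \( \dim \mathcal G_x = \dim G \), for then \( \dim G \le n \).

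To find such a point I would study the evaluation map \( \operatorname{ev}_x \colon \lie g \to T_xM \), \( X \mapsto X_x \), whose image is \( \mathcal G_x \) and whose kernel \( \lie g_x \) is the infinitesimal stabiliser. Put \( r = \max_x \dim \mathcal G_x \) and let \( U \subset M \) be the set where this maximum is attained. Then \( U \) is open, and its complement is cut out by the simultaneous vanishing of all \( r\times r \) minors of \( [X_1(x)\,\cdots\,X_k(x)] \) in a basis \( X_1,\dots,X_k \) of \( \lie g \); because the Killing fields \( X_i \) are real-analytic (the hyperKähler metric being real-analytic), at least one such minor is not identically zero, so the complement is nowhere dense and \( U \) is dense. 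I then claim \( r = \dim G \). Suppose not, so \( \lie g_x \ne 0 \) on \( U \). Using that \( G \) is abelian, the stabilisers are constant along orbits and conjugate isotropy subgroups coincide, so the regular isotropy algebras assemble into a single fixed subalgebra \( \lie h = \lie g_x \subset \lie g \) valid throughout \( U \).

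Each \( X \in \lie h \) is then a continuous (indeed Killing) field vanishing on the dense set \( U \), hence \( X \equiv 0 \) on the connected manifold \( M \); thus \( \exp(tX) \) acts trivially, contradicting effectiveness unless \( \lie h = 0 \). This gives \( r = \dim G \), and combined with the pointwise bound we conclude \( \dim G \le n \). I expect the one genuinely delicate step to be the claim that the regular infinitesimal stabiliser is a single fixed subalgebra rather than a family varying over \( U \): this is precisely where abelianness enters, through equality of conjugate stabilisers together with the need to join regular points within the principal stratum. Everything else is either the recorded pointwise identity or soft genericity, so this constancy of the regular isotropy is the part on which I would spend the most care.
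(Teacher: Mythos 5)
Your overall skeleton is the same as the paper's: the pointwise bound \( \dim\mathcal G_x \leqslant n \) from \( \mathcal G_x \bot A\mathcal G_x \), plus a genericity argument producing a point where the evaluation map \( \lie g \to \mathcal G_x \) is injective. But the step you flag as delicate is a genuine gap, and the justification you offer cannot close it. ``Stabilisers are constant along orbits'' together with ``conjugate stabilisers coincide'' controls only the variation of \( \lie g_x \) \emph{within} a single orbit; the issue is variation \emph{across} the (generically \( 3n \)-parameter) family of distinct orbits, about which abelianness alone says nothing. Indeed the claim is false for general effective analytic Abelian actions: on \( \bR^2 \) take the commuting fields \( X_1 = \partial/\partial x \) and \( X_2 = f(y)\,\partial/\partial x \) with \( f \) non-constant and analytic; the resulting \( \bR^2 \)-action is effective and analytic, every point is regular (\( r = 1 \)), yet \( \lie g_{(x,y)} = \bR(-f(y),1) \) varies continuously. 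So any correct proof must use the metric (or moment map) structure, not just abelianness and analyticity. Moreover your appeal to ``joining regular points within the principal stratum'' implicitly invokes the principal orbit theorem, which rests on a slice theorem and hence on properness of the action --- not available here, since \( G \cong \bR^m \times T^k \) need not act properly on \( M \).

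This is exactly where the paper works harder: it chooses \( x \) minimising \( \dim\stab_G(x) \), builds an isometric slice by hand, \( F(g,w) = g\cdot\exp_x w \), descending to \( G\times_H W \) with \( H = \stab_G(x) \), so that points \( z = F(e,w) \) near \( x \) satisfy \( \nLie\stab_G(z) \subset \lie h \); then analyticity of Killing fields plus abelianness produce, for any non-zero \( X \in \lie h \), such a \( z \) with \( X_z \ne 0 \), whence \( \dim\stab_G(z) < \dim\stab_G(x) \), a contradiction. Your argument can be repaired along these lines, or alternatively by a direct computation with the Killing equation: near a point of \( U \) write \( Y = \sum_{i=1}^r f_iX_i \) pointwise, where \( X_1,\dots,X_r \) are pointwise independent Killing fields and \( Y \in \lie g \); the Killing equation for \( Y \) forces each \( df_i \) to be a combination \( \sum_{l\leqslant r} c_{il}\,g(X_l,\any) \) with \( (c_{il}) \) antisymmetric, while commutativity \( [X_i,Y] = 0 \) gives \( X_i(f_l) = 0 \) for all \( i,l \); invertibility of the Gram matrix \( (g(X_i,X_l)) \) then yields \( df_l = 0 \), i.e.\ local constancy of \( \lie g_x \) on \( U \). (Note that you then need analyticity, not mere continuity, of the Killing field to pass from vanishing on one component of \( U \) to vanishing on all of \( M \), since \( U \) need not be connected.) Either way, some use of the isometric structure is indispensable; as written, your proof does not have it.
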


\begin{proof}
  For each \( x \in M \), the discussion above shows that the
  tri-Hamiltonian condition gives \( \dim\mathcal G_x \leqslant n \).
  We thus need to show that there is some \( x \in M \) such that the
  map \( \lie g \to \mathcal G_x \), \( X \mapsto X_x \), is
  injective.

  Fix a point \( x \in M \) such that \( \dim\stab_G(x) \) is the
  least possible.  Note that \( H = \stab_G(x) \) is a compact
  subgroup of \( \SP(n) \leqslant \SO(4n) \).  We may therefore
  \( H \)-invariantly write \( T_xM = T_x(G\cdot x) \oplus W \) as an
  orthogonal direct sum of the tangent space to the orbit
  through~\( x \) and its orthogonal complement~\( W \).  Now consider
  the map \( F\colon G\times W \to M \) given by
  \begin{equation*}
    F(g,w) = g\cdot (\exp_xw) = \exp_{gx}{g_*w}.
  \end{equation*}
  At \( (e,0) \in G \times W \) this has differential
  \( (F_*)_{(e,0)}(X,w) = X_x + w \) and
  \( F(gh,(h_*)^{-1}w) = F(g,w) \) for each \( h \in H \).  Thus
  \( F \) descends to a diffeomorphism from a neighbourhood of
  \( (e,0) \in G\times_H W \) to a neighbourhood~\( U \) of
  \( x \in M \) which is equivariant for the action of~\( \lie g \).
  In particular \( \stab_G F(e,w) \subset H \) when \( F(e,w) \in U \).

  As \( G \) acts effectively, we have for each
  \( X \in \lie g\setminus \Set0 \) there is some point~\( y \) with
  \( X_y \ne 0 \).  But \( M \) is Ricci-flat, so the Killing vector
  field \( X \) is analytic, thus the set
  \( \Set{ y \in M \with X_y \ne 0} \) is open and dense.

  If \( \dim H = \dim\stab_G(x) \) is non-zero, then there is a
  non-zero element \( X \in \lie h \).  Now \( X \) is non-zero at
  some point \( y = F(g,w) \) of~\( U \), and
  \( z = g^{-1}y = F(e,w) \) has \( X_z = (g_*)^{-1}X_y \ne 0 \) too,
  since \( G \) is Abelian.  So \( \nLie\stab_G(z) \) is a subspace of
  \( \lie h \) not containing~\( X \).  It follows that
  \( \dim\stab_G(z) < \dim\stab_G(x) \), contradicting our choice
  of~\( x \).

  We conclude that \( \dim\lie h = 0 \), so \( \stab_G(x) \) is finite
  and \( \lie g \mapsto \mathcal G_x \) is a bijection.  Thus \(
  \dim\lie g \leqslant n \).
  \qed
\end{proof}

Any connected Abelian group of finite dimension is of the form
\( G = \bR^m \times T^k \) for some \( m,k\geqslant 0 \).  If \( M \)
is simply-connected then the tri-symplectic \( T^k \)-action is
necessarily tri-Hamiltonian: each \( \mu_A^Y \) obtains its maximum on
each \( T^k \)-orbit, and so \( L_X\mu_A^Y = X \hook d\mu_A^Y \) is
zero at these points, and hence on all of~\( M \).  If
\( \dim G = n \) and the \( G \)-action is tri-Hamiltonian, Bielawski
\cite{Bielawski:tri-Hamiltonian} proves that the \( \bR^m \) factor
acts freely and any discrete subgroup of~\( \bR^m \) acts properly
discontinuously, so a discrete quotient of~\( M \) has a
tri-Hamiltonian~\( T^n \)-action.  In general, a hyperKähler manifold
of dimension~\( 4n \) with a tri-Hamiltonian \( T^n \) action is
called \emph{hypertoric}.

Bielawski \cite{Bielawski:tri-Hamiltonian} classified the hypertoric
manifolds in any dimension under the assumption that \( M \) has
finite topological type, meaning that the Betti numbers of~\( M \) are
finite.  For \( \dim M=4 \), this classification is extended to
general hypertoric~\( M \) in \cite{Swann:twist-mod}.  Here we wish to
provide the full classification of hypertoric manifolds in arbitrary
dimension, without any restriction on the topology.  First let us
recall some of the four-dimensional story.

\subsection{Dimension four}
\label{sec:dimension-four}

Let \( M \) be a four-dimensional hyperKähler manifold with
an effective tri-Hamiltonian \( S^1 \)-action of period~\( 2\pi \).
Let \( X \) be the corresponding vector field on~\( M \).  Note that
the only special orbits for the action are fixed points: if
\( g \in S^1 \) stabilises the point~\( x \) and \( X_x \ne 0 \), then
\( g \) fixes \( T_xM = \Span\Set{X_x,IX_x,JX_x,KX_x} \) and hence a
neighbourhood of~\( x \), so by analyticity \( g = e \).

The hyperKähler moment map
\begin{equation*}
  \mu = (\mu_I,\mu_J,\mu_K) \colon M \to \bR^3
\end{equation*}
is a local diffeomorphism away from the fixed point set~\( M^X \).
Locally on \( M' = M\setminus M^X \), the hyperKähler metric may be
written as
\begin{equation*}
  g = \frac1V \beta_0^2 + V(\alpha_I^2+\alpha_J^2+\alpha_K^2),
\end{equation*}
where \( \alpha_A = X\hook\omega_A = d\mu_A \), \( V = 1/g(X,X) \) and
\( \beta_0 = \alpha_0/\norm X = g(X,\any)V^{1/2} \).  The hyperKähler
condition is now equivalent to the monopole equation
\( d\beta_0 = - \Hodge_3 dV \), which implies that locally \( V \) is
a harmonic function on~\( \bR^3 \).

\begin{theorem}[\cite{Swann:twist-mod}]
  \label{thm:4d}
  Let \( M \) be a complete connected hyperKähler manifold of
  dimension~\( 4 \) with a tri-Hamiltonian circle action of
  period~\( 2\pi \).  Then the hyperKähler moment map
  \( \mu\colon M \to \bR^3 \) is surjective with connected fibres and
  induces a homeomorphism \( \overline\mu\colon M/S^1 \to \bR^3 \).
  The metric on~\( M \) is specified by any harmonic
  function~\( V\colon \bR^3\setminus Z \to (0,\infty) \) of the form
  \begin{equation}
    \label{eq:V3}
    V(p) = c + \frac12 \sum_{q \in Z} \frac1{\norm{p - q}}
  \end{equation}
  where \( c \geqslant 0 \) is constant and \( Z \subset \bR^3 \)~is
  finite or countably infinite.
  \qed
\end{theorem}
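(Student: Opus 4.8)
The plan is to pass to the quotient by the circle action, identify it with $\bR^3$ through the moment map, and then read off the metric from the Gibbons--Hawking description recorded above. Write $M' = M \setminus M^X$, where the action is free, since the only non-principal orbits are fixed points. On $M'$ the map $\mu$ is a local diffeomorphism, so it descends to a local diffeomorphism $\overline\mu\colon M'/S^1 \to \bR^3 \setminus Z$ with $Z := \mu(M^X)$, and the submersion metric on $M'/S^1$ is $V$ times the Euclidean metric pulled back by $\overline\mu$. First I would analyse the local model at a fixed point $p$: linearising the isometric tri-holomorphic action presents a neighbourhood of $p$ as flat $\bH \cong \bC^2$ with $S^1$ acting by $q \mapsto q\,e^{i\theta}$. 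This shows that fixed points are isolated and $Z$ discrete, that $\bH/S^1 \cong \bR^3$ topologically (the cone on the Hopf fibration $S^3 \to S^2$), and, comparing with the flat Gibbons--Hawking datum, that $V$ has exactly the singularity $\tfrac12\norm{p-q}^{-1}$ at each $q = \mu(p)$, the coefficient $\tfrac12$ being forced by smoothness together with the period~$2\pi$.

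Combining the two local pictures, $\overline\mu$ extends to a local homeomorphism of the full quotient $M/S^1 \to \bR^3$: a local diffeomorphism over free orbits and the cone homeomorphism at images of fixed points. Next I would deduce from completeness of $(M,g)$ that this map is proper. Using $g = \tfrac1V\beta_0^2 + V(\alpha_I^2 + \alpha_J^2 + \alpha_K^2)$ one controls the $M$-length of horizontal lifts of paths in $\bR^3$ by $\int\!\sqrt V$, and positivity of $V$ together with completeness prevents a lift over a bounded region from leaving every compact set. A proper local homeomorphism onto the simply connected space $\bR^3$ is a covering map, hence trivial; as $M$, and therefore $M/S^1$, is connected, it has a single sheet, so $\overline\mu$ is a homeomorphism. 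This yields at once the surjectivity of $\mu$ and, over regular values, fibres that are single circles, hence connected.

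It then remains to determine $V$ as a function on $\bR^3 \cong M/S^1$. Here $V$ is a positive harmonic function on $\bR^3 \setminus Z$ with $Z$ discrete. By the Riesz--Martin representation every such function is a nonnegative combination of the constant function, which is the only positive harmonic function on all of $\bR^3$ by Liouville's theorem, and the fundamental solutions $\norm{p-q}^{-1}$ at the punctures, whose appearance as the local singularity type is guaranteed by B\^ocher's theorem. Thus $V = c + \sum_{q\in Z} a_q\,\norm{p-q}^{-1}$ with $c \geqslant 0$ and $a_q \geqslant 0$; Step~1 fixes $a_q = \tfrac12$, which is precisely~\eqref{eq:V3}, and convergence of the sum is then automatic from $V > 0$.

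The step I expect to be the main obstacle is the global argument of the second paragraph when $Z$ is countably infinite, so that $\bR^3 \setminus Z$ is far from simply connected and a covering argument over the punctured target would fail. The device is to work with the \emph{full} quotient $M/S^1 \to \bR^3$, whose target is simply connected, and to extract properness honestly from the explicit metric rather than from any compactness hypothesis: one must show that a path in $\bR^3$ of finite Euclidean length lifts to a path of finite length in $M$ which cannot leave every compact set, and it is exactly here that $V > 0$ and completeness are used together. A subsidiary point is that the Riesz--Martin step requires knowing that the Martin boundary of $\bR^3 \setminus Z$ reduces to the punctures and the single point at infinity even when $Z$ is infinite.
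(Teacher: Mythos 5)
First, a point of reference: this paper does not actually prove Theorem~\ref{thm:4d} --- it is imported wholesale from \cite{Swann:twist-mod} (hence the \qed attached to the statement itself), and Section~\ref{sec:dimension-four} only records ingredients: the local Gibbons--Hawking form of the metric on \( M\setminus M^X \), the identification \( Z = \mu(M^X) \), and the lemma on when the sum in \eqref{eq:V3} converges. Your architecture (local model at fixed points, induced local homeomorphism on the full quotient \( M/S^1 \), properness plus a covering argument over the simply connected target \( \bR^3 \), then potential theory to pin down \( V \)) is the natural one and is consistent with that source. The problem is that the central step is not proved, and the justification you sketch for it is not merely incomplete but would fail.

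The gap is exactly where you place it: properness of \( \overline\mu\colon M/S^1 \to \bR^3 \). To run your argument one needs an \emph{upper} bound on \( V \) near a hypothetical boundary point \( b \) of the open set \( \mu(M) \), so that \( \int\sqrt V\,ds \) along a short Euclidean segment ending at \( b \) is finite; completeness then makes the lifted path converge in \( M \), forcing \( b \in \mu(M) \). Positivity of \( V \) supplies no such bound, because \( V \) is only known to be harmonic on \( \mu(M) \), not on a full ball around \( b \). A positive harmonic function on a domain \( U \) can blow up like \( \mathrm{dist}(\any,\partial U)^{-2} \) at a boundary point --- the Poisson kernel of a ball in \( \bR^3 \) does exactly this --- and then \( \int \sqrt V\,ds \) diverges along \emph{every} segment into that point, so your claimed implication ``finite Euclidean length \( \Rightarrow \) finite lifted length'' is false in this generality. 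Nor can one retreat to Harnack chains: each halving of the distance to \( b \) costs a fixed factor (\( 6 \) in \( \bR^3 \) at half radius), so Harnack only gives \( V \lesssim \delta^{-\log_2 6} \) with \( \log_2 6 > 2 \), whose square root is again non-integrable. Thus ``\( V>0 \) together with completeness'' does not by itself prevent lifts over bounded regions from escaping to infinity; what must be excluded is Poisson-kernel-type blow-up of \( V \) simultaneously at all points of a hypothetical boundary of \( \mu(M) \), and that exclusion --- the real analytic content of the theorem, and the reason the infinite topological type cases are delicate --- is absent from the proposal.

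Two further points sit downstream of this. Discreteness of \( Z \) does not follow merely from fixed points being isolated in \( M \): infinitely many isolated fixed points could a priori have images accumulating in \( \bR^3 \), so this too needs properness before your third paragraph can speak of isolated singularities of \( V \). And once the homeomorphism \( M/S^1 \cong \bR^3 \) is established, the Riesz--Martin machinery (with the unresolved identification of the Martin boundary of \( \bR^3\setminus Z \) for infinite \( Z \) that you flag) is avoidable: B\^ocher's theorem at each puncture, a minimum-principle argument showing each finite partial sum \( \sum_{q} a_q \norm{p-q}^{-1} \) is dominated by \( V \) (giving locally uniform convergence of the full sum), and Liouville's theorem applied to the entire harmonic remainder yield \eqref{eq:V3} directly, with \( a_q = \tfrac12 \) forced by smoothness of the metric at the fixed points together with the period-\( 2\pi \) normalisation, exactly as you say.
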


The set \( Z = \mu(M^X)\) is the image of the fixed-point
set~\( M^X \).  The metrics with \( c = 0 \) and \( Z \)~finite are
the Gibbons-Hawking metrics \cite{Gibbons-H:multi}; \( c > 0 \) and
\( Z \)~finite gives the older multi-Taub-NUT metrics
\cite{Hawking:gravitational}.  For \( Z \ne \varnothing \), the
hyperKähler manifold \( M \) is simply-connected, with
\( b_2(M) = \abs Z - 1 \); for \( c > 0 \), \( Z = \varnothing \), we
have \( M = S^1 \times \bR^3 \) with \( \mu \)~projection to the
second factor.  For \( Z \) infinite and \( c = 0 \), the hyperKähler
metrics are of type \( A_\infty \) as constructed by Anderson,
Kronheimer and LeBrun~\cite{AndersonMT-KL:infinite} and
Goto~\cite{Goto:A-infinity}, concentrating on the case
\( Z = \Set{(n^2,0,0) \with n \in \bN_{>0}} \), and written down for
general \( Z \) in Hattori~\cite{Hattori:Ainfty-volume}.  These are
the examples of infinite topological type.

To understand and extend Hattori's general formulation of these
structures, we need to study when \eqref{eq:V3} gives a finite sum on
an open subset of~\( \bR^3 \).  By Harnack's Principle (see
e.g.~\cite{Axler-BR:harmonic}) if \eqref{eq:V3} is finite at one point
of~\( \bR^3 \), then it is finite on all of \( \bR^3\setminus Z \).
This may be seen in a elementary way via the following result.

\begin{lemma}
  Suppose \( (q_n)_{n\in\bN} \) is a sequence of points in~\( \bR^3 \)
  is given. Then the series
  \( S_1 = \sum_{n \in \bN} \norm{p-q_n}^{-1} \) converges at some
  \( p \in \bR^3\setminus \Set{q_n\with n\in\bN} \) if and only if the
  series \( S_2 = \sum_{n \in \bN}(1+\norm{q_n})^{-1} \) converges.
\end{lemma}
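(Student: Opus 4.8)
The plan is to exploit the fact that convergence of \emph{either} series forces the points \( q_n \) to escape to infinity, after which the two general terms become comparable up to fixed constants. First I would record the elementary observation that the terms of a convergent series tend to zero. For \( S_2 \) this gives \( (1+\norm{q_n})^{-1}\to0 \), hence \( \norm{q_n}\to\infty \); for \( S_1 \) it gives \( \norm{p-q_n}^{-1}\to0 \), hence \( \norm{p-q_n}\to\infty \), and since \( \bigl|\norm{q_n}-\norm p\bigr|\leqslant\norm{p-q_n}\leqslant\norm p+\norm{q_n} \) this too is equivalent to \( \norm{q_n}\to\infty \). In particular, under either hypothesis only finitely many of the \( q_n \) lie in any fixed ball.

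Next I would establish a two-sided comparison of the general terms for large~\( n \). Fix any \( p\in\bR^3\setminus\Set{q_n\with n\in\bN} \), which exists because the set of \( q_n \) is countable. Choosing \( N \) so large that \( \norm{q_n}\geqslant\max\Set{1,2\norm p} \) for all \( n\geqslant N \), the triangle inequality gives \( \tfrac12\norm{q_n}\leqslant\norm{p-q_n}\leqslant\tfrac32\norm{q_n} \), while \( \norm{q_n}\geqslant1 \) gives \( \tfrac12(1+\norm{q_n})\leqslant\norm{q_n}\leqslant1+\norm{q_n} \). Combining these yields constants \( c_1,c_2>0 \) with
\[
  c_1\,(1+\norm{q_n})^{-1}\leqslant\norm{p-q_n}^{-1}\leqslant c_2\,(1+\norm{q_n})^{-1}
  \qquad(n\geqslant N).
\]

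The conclusion then follows from the comparison test applied to the tails \( \sum_{n\geqslant N} \), the finitely many initial terms being finite precisely because \( p\neq q_n \) for every~\( n \). For the ``only if'' direction one works with the given \( p \) at which \( S_1 \) converges; for the ``if'' direction one notes that convergence of \( S_2 \) already forces \( \norm{q_n}\to\infty \), then selects any admissible \( p \), at which the same comparison shows \( S_1 \) converges. I do not expect a genuine obstacle here: the single point requiring care is the clean separation of the finitely many terms with \( q_n \) close to \( p \) from the tail, so that the comparison constants \( c_1,c_2 \) can be taken uniform in~\( n \).
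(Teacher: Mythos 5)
Your proposal is correct and follows essentially the same route as the paper's proof: both discard the finitely many \( q_n \) near \( p \) (you via ``terms of a convergent series tend to zero'', the paper via a compactness/subsequence observation) and then compare \( \norm{p-q_n}^{-1} \) with \( (1+\norm{q_n})^{-1} \) using the triangle inequality. The only cosmetic difference is that you package the estimates as a single two-sided comparison on the tail, whereas the paper uses the unconditional bound \( \norm{p-q}\leqslant(1+\norm p)(1+\norm q) \) for one direction and a tail estimate only for the other.
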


\begin{proof}
  First note that if there is a compact subset \( C \) of~\( \bR^3 \)
  containing infinitely many points of the sequence \( (q_n) \), then
  neither sum converges: there is some subsequence \( (q_i)_{i\in I} \) that
  converges to a~\( q \in \bR^3 \), and so infinitely many terms are
  greater than some strictly positive lower bound.

  Now putting \( c = 1 + \norm p \), we have
  \( \norm{p - q} \leqslant \norm p + \norm q \leqslant c(1 + \norm q)
  \).  It follows that convergence of \( S_1 \) implies converges
  of~\( S_2 \).

  For the converse, we consider
  \( q \in \bR^3 \setminus \overline B(0;R) \) for
  \( R = 1+2\norm p \) and have
  \begin{equation}
    \label{eq:ineq}
    \begin{split}
      \norm{p - q}
      &\geqslant \norm q - \norm p = \tfrac12 \norm q + (\tfrac
      12\norm q - \norm p)\\
      &> \tfrac12(\norm q + 1).
    \end{split}
  \end{equation}
  If \( S_2 \) converges, then
  \( \Set{n \in \bN \with q_n \in \overline B(0;R)} \) is
  finite, so the inequality~\eqref{eq:ineq} implies convergence
  of~\( S_1 \).
  \qed
\end{proof}

Finally let us remark that scaling the hyperKähler metric~\( g \) by a
constant~\( C \) scales~\( V \) as a function on~\( M \) by
\( C^{-1} \).  However the hyperKähler moment map~\( \mu \) also
scales by~\( C \), so the induced function~\( V(p) = V(\mu(x)) \)
on~\( \bR^3 \) has the same form, with a new constant term~\( c/C \)
and the points \( q \) replaced by~\( q/C \).  On the other hand
scaling the vector field \( X \) by a constant, so that the action it
generates is no longer of period~\( 2\pi \), scales \( V \) on~\( M \)
and \( \mu \) by different weights.  In particular, such a change
alters the factors \( 1/2 \) in~\eqref{eq:V3}.

\subsection{Construction of hypertoric manifolds}
\label{sec:constr-hypert-manif}

Bielawski and Dancer \cite{Bielawski-Dancer:toric} provided a general
construction of hypertoric manifolds in all dimensions with finite
topological type.  Goto \cite{Goto:A-infinity} gave a particular
construction of examples of in arbitrary dimension of infinite
topological type.  Let us now build on Hattori's four-dimensional
description \cite{Hattori:Ainfty-volume}, to combine these two
constructions.

Let \( \bL \) be a finite or countably infinite set.
Choose \( \Lambda = (\Lambda_k)_{k\in\bL} \in \bH^\bL \) and define
\( \lambda = (\lambda_k)_{k\in \bL} \) by
\( \lambda_k = - \tfrac12\overline\Lambda_k i \Lambda_k \in \iH
\).  For each \( k \in \bL \), let \( u_k \in \bR^n \) be a non-zero
vector and put \( \hat\lambda_k = \lambda_k/\norm{u_k} \).  Suppose
\begin{equation}
  \label{eq:convergence}
  \sum_{k\in \bL}(1 + \abs{\hat\lambda_k})^{-1} < \infty.
\end{equation}
Consider the Hilbert manifold
\( M_\Lambda = \Lambda + \bL^2(\bH) \), where
\begin{equation*}
  \bL^2(\bH) = \Set[\Big]{ v \in
  \bH^{\bL} \with \sum_{k\in\bL} \abs{v_k}^2 < \infty}.
\end{equation*}
Let \( T_\lambda \) be the Hilbert group
\begin{equation*}
  T_\lambda = \Set[\Big]{ g \in T^{\bL} =
  (S^1)^{\bL} \with \sum_{k\in\bL} (1 +
  \abs{\lambda_k})\, \abs{1-g_k}^2 < \infty}.
\end{equation*}
If \( \norm{u_k} \) is bounded away from~\( 0 \), then
\( g \in T_\lambda \) implies \( g_k \) is arbitrarily close to~\( 1 \)
except for a finite number of \( k \in \bL \).  As
\( \abs{1-\exp(it)}^2 = 2 - 2\cos(t) \leqslant t^2 \) for all
\( t\in\bR \) and \( \abs{1-\exp(it)}^2 \geqslant 2t^2/\pi^2 \) on
\( (-\pi/2,\pi/2) \), we see that the Lie algebra of \( T_\lambda \)
is
\begin{equation*}
  \lie t_\lambda = \Set[\Big]{ t \in \bR^\bL \with \norm
  t_{\lambda,\lie t}^2 = \sum_{k\in\bL} (1 + \abs{\lambda_k})\, \abs{t_k}^2
  < \infty}.
\end{equation*}

Now consider the linear map
\( \beta \colon \lie t_\lambda \to \bR^n \) given by
\( \beta(e_k) = u_k \), where \( e_i = (\delta_i^k)_{k\in\bL} \),
where \( \delta_i^k \in \Set{0,1} \) is Kronecker's delta.  Supposing
\( \beta \)~is continuous then we define
\( \lie n_\beta = \ker \beta \subset \lie t_\lambda\).  If
\( u_k \in \bZ^n \subset \bR^n \) for each~\( k\in\bL \), we may
define a Hilbert subgroup \( N_\beta \) of~\( T_\lambda \) by
\begin{equation*}
  N_\beta = \ker(\exp \circ \beta \circ \exp^{-1} \colon T_\lambda \to
  T^n).
\end{equation*}
This gives exact sequences
\begin{gather}
  \label{eq:ex-lie}
  \begin{CD}
    0 @>>> \lie n_\beta @>\iota>> \lie t_\lambda @>\beta>> \bR^n @>>> 0,
  \end{CD}\\
  \label{eq:ex-grp}
  \begin{CD}
    0 @>>> N_\beta @>>> T_\lambda @>>> T^n @>>> 0.
  \end{CD}
\end{gather}
Our aim now is to construct hypertoric manifolds of dimension~\( 4n \)
as hyperKähler quotients of \( M_\Lambda \) by~\( N_\beta \).

\begin{remark}
  The construction of Hattori \cite{Hattori:Ainfty-volume} corresponds
  to \( n = 1 \) and \( u_k = 1 \in \bR \) for each~\( k \).  For
  general dimension~\( 4n \), Goto's construction
  \cite{Goto:A-infinity} corresponds to
  \( \bL = (\bZ\setminus\Set0) \amalg \Set{1,\dots,n} \),
  \begin{equation*}
    \Lambda_k =
    \begin{dcases*}
      k\mathbf i,&for \( k \in \bZ_{>0} \),\\
      k\mathbf k,&for \( k \in \bZ_{<0} \),\\
      0,&for \( k \in \Set{1,\dots,n} \),
    \end{dcases*}
    \quad\text{with}\quad
    u_k =
    \begin{dcases*}
      \mathbf e_1,&for \( k \in \bZ\setminus\Set{0} \),\\
      \sum_{i=1}^n \mathbf e_i,&for \( k = 1 \in \Set{1,\dots,n} \),\\
      - \mathbf e_r,&for \( k = r \in \Set{2,\dots,n} \).
    \end{dcases*}
  \end{equation*}
  Thus Goto's construction is for one concrete choice of \(
  (\lambda_k)_{k\in\bL} \) and only one of the \( u_k \)'s is
  repeatedly infinitely many times.
\end{remark}

Returning to the general situation, note that the integrality of
\( u_k \) implies \( \norm{u_k} \geqslant 1 \), so the convergence
condition~\eqref{eq:convergence} implies
\begin{equation}
  \label{eq:conv-lambda}
  \sum_{k\in\bL} (1+\abs{\lambda_k})^{-1} < \infty.
\end{equation}

The group \( T_\lambda \) acts on \( M_\Lambda \) via
\( gx = (g_kx_k)_{k\in\bL} \): indeed for \( g \in T_\lambda \) and
\( x = \Lambda + v \in M_\Lambda \), we have
\( gx = g\Lambda + gv = \Lambda - (1-g)\Lambda + gv \),
but \( gv \in \bL^2(\bH) \) and
\( \norm{(1-g)\Lambda}^2 = \sum_{k\in\bL}
\tfrac12\abs{\lambda_k}\abs{1-g_k}^2 \leqslant \tfrac12
\sum_{k\in\bL} (1+\abs{\lambda_k}) \abs{1-g_k}^2 \), which is finite
by the definition of~\( T_\lambda \), so \( (1-g)\Lambda \in \bL^2(\bH)
\) too.  The action preserves the flat
hyperKähler structure with \( \bL^2 \)-metric and complex structures
obtained by regarding \( \bL^2(\bH) \) as a right \( \bH \)-module.
Identifying \( \bR^3 \) with
\( \iH = \bR\mathbf i+ \bR\mathbf j+\bR\mathbf k \), a corresponding
hyperKähler moment map is given by
\( \inp{\mu_\Lambda(x)}{t} = \tfrac12\sum_{k\in\bL}
(\overline x_k\mathbf it_kx_k - \overline \Lambda_k\mathbf
it_k\Lambda_k) \).  The terms
\( -\tfrac12\overline\Lambda_k\mathbf it_k\Lambda_k \) ensure that the
sum in \( \mu_\Lambda \) converges, but otherwise are arbitrary linear
terms in \( t_k \) with values in \( \iH \).  With our definition of
\( \lambda_k \), we have
\begin{equation*}
  \mu_\Lambda(x) = \sum_{k\in\bL} \bigl(\lambda_k + \tfrac12\overline
  x_k\mathbf ix_k \bigr)e_k^*.
\end{equation*}
The hyperKähler moment map for the subgroup \( N_\beta \) is then
\( \mu_\beta = \iota^*\mu_\Lambda \colon M_\Lambda \to \iH \otimes
\lie n_\beta^* \).  We define
\begin{equation*}
  M = M(\beta,\lambda) = M_\Lambda \hkq N_\beta = \mu_\beta^{-1}(0)
  / N_\beta.
\end{equation*}
Since \eqref{eq:ex-lie} is exact, we have dually
\( \ker\iota^* = \imm\beta^* \) and hence the following
characterisation of \( \mu_\beta^{-1}(0) \).

\begin{lemma}
  A point \( x \in M_\Lambda \) lies in the zero set of the
  hyperKähler moment map~\( \mu_\beta \) for~\( N_\beta \) if and only
  if there is an \( a \in \iH \otimes (\bR^n)^* \) with
  \begin{equation}
    \label{eq:im-beta}
    a(u_k) = \lambda_k + \tfrac12\overline x_k\mathbf ix_k
  \end{equation}
  for each \( k \in \bL \), where \( u_k = \beta(e_k) \).
  \qed
\end{lemma}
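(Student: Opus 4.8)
The plan is to read off the condition directly from the defining relation \( \mu_\beta = \iota^*\mu_\Lambda \) together with the duality between the two exact sequences. First I would observe that \( \mu_\beta(x) = \iota^*\mu_\Lambda(x) \) vanishes precisely when the functional \( \mu_\Lambda(x) \in \iH \otimes \lie t_\lambda^* \) annihilates \( \lie n_\beta = \imm\iota = \ker\beta \); that is, when \( \mu_\Lambda(x) \in \ker\iota^* \).

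Next I would invoke the duality already recorded just before the statement, namely \( \ker\iota^* = \imm\beta^* \), where \( \beta^* \colon \iH \otimes (\bR^n)^* \to \iH \otimes \lie t_\lambda^* \) is the transpose of \( \beta \) tensored with the identity on~\( \iH \) (the factor \( \iH \) plays no role, since \( \beta \) is real-linear). Thus \( \mu_\beta(x) = 0 \) if and only if there exists some \( a \in \iH \otimes (\bR^n)^* \) with \( \mu_\Lambda(x) = \beta^* a \).

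Finally I would make this explicit by pairing both sides with the basis vectors \( e_k \). On the one hand \( \inp{\beta^* a}{e_k} = \inp{a}{\beta(e_k)} = a(u_k) \), and on the other hand the stated formula for \( \mu_\Lambda \) gives \( \inp{\mu_\Lambda(x)}{e_k} = \lambda_k + \tfrac12\overline x_k\mathbf i x_k \). Since the \( e_k \) span a dense subspace of~\( \lie t_\lambda \) and both \( \mu_\Lambda(x) \) and \( \beta^* a \) are continuous functionals, the identity \( \mu_\Lambda(x) = \beta^* a \) is equivalent to the system \( a(u_k) = \lambda_k + \tfrac12\overline x_k\mathbf i x_k \) holding for every \( k \in \bL \), which is exactly the asserted condition.

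The argument is essentially a matter of unwinding the definitions, so I do not expect a serious obstacle; the one point deserving a word of care is that the duality \( \ker\iota^* = \imm\beta^* \) relies on the finite-dimensionality of the target \( \bR^n \), which guarantees that \( \imm\beta^* \) is closed in \( \iH \otimes \lie t_\lambda^* \) and hence that no completion is hidden in the identification. With that observed, the equivalence follows at once.
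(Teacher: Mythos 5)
Your proposal is correct and is essentially the paper's own argument, which appears in the sentence immediately preceding the lemma: dualise the exact sequence \itref{eq:ex-lie} to get \( \ker\iota^* = \imm\beta^* \), so that \( \mu_\beta(x) = \iota^*\mu_\Lambda(x) = 0 \) iff \( \mu_\Lambda(x) = \beta^*a \) for some \( a \in \iH\otimes(\bR^n)^* \), and then pair with the \( e_k \). Your extra remark that the finite-dimensionality of \( \bR^n \) makes \( \imm\beta^* \) closed, so the annihilator identity holds exactly in the Hilbert-space setting, is a point the paper leaves implicit and is handled correctly.
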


In equation \eqref{eq:im-beta}, note that \( x_k = 0 \) if and only if
\( a(u_k) = \lambda_k \).  Indeed, the map \( \bH \to \iH \),
\( v \mapsto \overline v\mathbf iv \), is surjective with
\( \abs{\overline v\mathbf iv} = \abs v^2 \).  As in
\cite{Bielawski-Dancer:toric}, we define affine subspaces
\( H_k \subset \iH \otimes (\bR^n)^* \) of real codimension~\( 3 \) by
\begin{equation}
  \label{eq:flat}
  H_k = H(u_k,\lambda_k)
  = \Set{a \in \iH \otimes (\bR^n)^* \with a(u_k) = \lambda_k}
\end{equation}
which we call \emph{flats}.  Note that \( T^n = T_\lambda/N_\beta \)
acts on~\( M \) and that if \( M \) is smooth this action preserves
induced the hyperKähler structure and has moment
map~\( \phi\colon M \to \iH \otimes (\bR^n)^* \) induced
by~\( \mu_\Lambda \): indeed
\( \nLie T^n = \lie t_\lambda/\lie n_\beta \) implies
\( (\nLie T^n)^* = (\lie n_\beta)^0 \), the annihilator of
\( n_\beta \) in~\( \lie t_\lambda^* \), so on~\( \mu_\beta^{-1}(0) \)
the map~\( \mu_\Lambda \) takes values in
\( (\lie n_\beta)^0 = (\nLie T^n)^* \) and descends to~\( M \)
as~\( \phi \).  It follows, as in~\cite{Bielawski-Dancer:toric}, that
\( \phi \) induces a homeomorphism \( M/T^n \to \iH\otimes(\bR^n)^* \)
and that for \( p \in M \), the stabiliser \( \stab_{T^n}(p) \) is the
subtorus with Lie algebra spanned by the \( u_k \) such that
\( \phi(p) \in H_k \).

\begin{theorem}
  \label{thm:construction}
  Suppose \( u_k = \beta(e_k) \in \bZ^n \), \( k \in \bL \), are
  primitive and span~\( \bR^n \).  Let \( \lambda_k \in \iH \),
  \( k \in \bL \), be given such that the convergence
  condition~\eqref{eq:convergence} holds and the flats
  \( H_k = H(u_k,\lambda_k) \), \( k \in \bL \), are distinct.  Then
  the hyperKähler quotient \( M = M(\beta,\lambda) \) is smooth if
  \setitemindent{\hspace{\parindent}(a)}
  \begin{enumerate}[\upshape (a)]
  \item\label{item:a} any set of \( n+1 \) flats~\( H_k \) has empty
    intersection, and
  \item\label{item:b} whenever \( n \) distinct flats
    \( H_{k(1)},\dots,H_{k(n)} \) have non-empty intersection the
    corresponding vectors \( u_{k(1)},\dots,u_{k(n)} \) form a
    \( \bZ \)-basis for~\( \bZ^n \).
  \end{enumerate}
\end{theorem}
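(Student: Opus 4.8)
The plan is to reduce the smoothness of \( M = \mu_\beta^{-1}(0)/N_\beta \) to the single statement that \( N_\beta \) acts \emph{freely} on the zero set \( \mu_\beta^{-1}(0) \), and then to extract this freeness from the combinatorial hypotheses \itref{item:a} and \itref{item:b}. The general mechanism of hyperKähler reduction is this: if the \( N_\beta \)-action on \( \mu_\beta^{-1}(0) \) has trivial stabilisers, then the infinitesimal action \( \lie n_\beta \to T_xM_\Lambda \), \( X\mapsto X_x \), is injective at each such \( x \); because the action is tri-Hamiltonian the four subspaces \( \lie n_\beta\cdot x \), \( I\lie n_\beta\cdot x \), \( J\lie n_\beta\cdot x \), \( K\lie n_\beta\cdot x \) are mutually orthogonal (this is the condition \( \mathcal G\bot A\mathcal G \) of Section~\ref{sec:abelian-actions}), and a standard finite-dimensional computation then shows injectivity of the action forces \( d\mu_\beta \) to be surjective, so that \( 0 \) is a regular value. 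Granting the analytic points discussed at the end, \( \mu_\beta^{-1}(0) \) is then a smooth submanifold and its quotient by the free, proper \( N_\beta \)-action carries the induced hyperKähler structure. Thus the crux is freeness.

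First I would compute the stabilisers. Fix \( x \in \mu_\beta^{-1}(0) \) and let \( a = \mu_\Lambda(x) \in \iH\otimes(\bR^n)^* \) be the associated element from \eqref{eq:im-beta}, so that \( x_k = 0 \) precisely when \( a \in H_k \). Put \( K = \Set{k\in\bL \with a\in H_k} = \Set{k\in\bL\with x_k = 0} \). Since \( g\in T_\lambda \) fixes \( x \) if and only if \( g_k = 1 \) for every \( k\notin K \), the stabiliser \( \stab_{T_\lambda}(x) \) is supported on \( K \); intersecting with \( N_\beta = \ker(\exp\circ\beta\circ\exp^{-1}) \) identifies \( \stab_{N_\beta}(x) \) with the kernel of the homomorphism of tori \( (S^1)^K\to T^n \) induced by the lattice map \( \bZ^K\to\bZ^n \), \( e_k\mapsto u_k \). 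Infinitesimally, \( \nLie\stab_{N_\beta}(x) \) is the kernel of \( \bR^K\to\bR^n \), \( (t_k)\mapsto\sum_{k\in K}t_ku_k \). Hence the stabiliser is trivial exactly when the vectors \( \Set{u_k\with k\in K} \) are linearly independent and span a saturated sublattice of \( \bZ^n \), i.e.\ when they form part of a \( \bZ \)-basis.

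The combinatorial heart is to show that \itref{item:a} and \itref{item:b} force this condition for every \( K \) that arises. By \itref{item:a} we have \( \abs K\leqslant n \), so \( K \) is finite. For linear independence I would argue by contradiction: if \( r=\dim\Span_{\bR}\Set{u_k\with k\in K}<\abs K \), then, using that the \( u_k \) span \( \bR^n \), I adjoin flats one at a time whose normals lie outside the span of the normals accumulated so far. A flat whose normal is not in the current span always meets the current non-empty intersection in a proper affine subspace, so this keeps the common intersection non-empty while increasing the number of flats; after \( n-r \) steps the normals span \( \bR^n \) and we have exhibited \( \abs K + (n-r)>n \) distinct flats through a common point, contradicting \itref{item:a}. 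An entirely analogous extension, stopping once exactly \( n \) flats with \( \bR \)-independent normals meet, shows that if \( \Set{u_k\with k\in K} \) failed to extend to a \( \bZ \)-basis then those \( n \) normals would not form a \( \bZ \)-basis either, contradicting \itref{item:b}. This yields triviality of all stabilisers, hence freeness.

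The main obstacle is not this combinatorics---which is essentially the finite-dimensional input of Bielawski and Dancer \cite{Bielawski-Dancer:toric}---but the analytic verification that the reduction goes through in the Hilbert setting of \( M_\Lambda \). Concretely, I must check that at points of \( \mu_\beta^{-1}(0) \) the differential \( d\mu_\beta \) is surjective with closed, complemented kernel, so that \( \mu_\beta^{-1}(0) \) is a split Hilbert submanifold, and that the \( N_\beta \)-action is proper and admits slices, so that the quotient is a Hausdorff smooth manifold. Both points rest on controlling the infinite sums via the convergence condition~\eqref{eq:convergence}: it is this that makes the infinitesimal action of \( \lie n_\beta \) a bounded operator with closed range and renders the quaternionic orthogonality estimates uniform, and it is here, rather than in the stabiliser count, that the novelty over the finite-dimensional case lies.
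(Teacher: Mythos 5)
Your combinatorial work is correct, and it even fills in a detail the paper passes over: the paper's freeness lemma simply asserts that condition (b) makes the vectors \( u_k \), \( k \in \bK(x) = \{k : x_k = 0\} \), part of a \( \bZ \)-basis, whereas you justify this by the flat-adjoining argument (a flat whose normal is independent of the current normals always meets the current intersection, so (a) forces linear independence and (b) then applies after extending to \( n \) flats). Your identification of \( \stab_{N_\beta}(x) \) with the kernel of the induced torus map \( (S^1)^{\bK(x)} \to T^n \), trivial exactly when the \( u_k \) span a saturated sublattice, is also right and matches the paper's lemma. The problem is the overall logical frame: in this Hilbert-manifold setting smoothness of \( M(\beta,\lambda) \) does not reduce to freeness, and the part you defer to a final "check" is precisely the bulk of the paper's proof; moreover it does not go through in the way you describe.

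Three things are missing. First, continuity of \( \beta \) is not among the theorem's hypotheses, and without it \( \lie n_\beta \) need not be closed and \( N_\beta \) need not be a Hilbert Lie subgroup at all; the paper secures this with a separate Proposition showing that condition (b) alone forces the set \( \{u_k : k \in \bL\} \) to be \emph{finite}, whence \( \norm{u_k} \) is bounded, \( \beta \) has a Riesz representative in \( \lie t_\lambda \), and \( N_\beta \) is a Hilbert subgroup of codimension \( n \). Your proposal never touches this. Second, and most importantly, freeness gives only \emph{injectivity} of \( t \mapsto X_x(t) \) on \( \lie n_\beta \), and in infinite dimensions injectivity does not yield split surjectivity of \( d\mu_\beta \): one needs the lower bound \( \norm{X_x(t)} \geqslant c \norm{t}_{\lambda,\lie t} \), i.e.\ that the infinitesimal action is a homeomorphism onto its image. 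This is the paper's key analytic lemma, and its proof is not a consequence of the convergence condition \eqref{eq:convergence} alone, as your last paragraph suggests; it uses condition (a) again (so that \( \bK(x) \) has at most \( n \) elements), injectivity of \( \beta \) on \( V_x = \Span\{e_k : k \in \bK(x)\} \) coming from (b), and an Open Mapping Theorem argument to control the inverse of the projection \( \pr_\beta \colon \lie n_\beta \to \pr^\bot(\lie n_\beta) \), together with the pointwise estimate \( \abs{x_k}^2 \geqslant (1+\abs{\lambda_k})/32 \) for all but finitely many \( k \). Third, once \( d\mu_\beta \) is split (the paper writes an explicit right inverse), the smooth structure on the quotient is obtained from Goto's explicit slices \( S_x = F_x^{-1}(0) \cap U \), not from a generic free-plus-proper principle, and the \( 4n \)-dimensionality and induced hyperKähler structure still have to be checked. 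So while your freeness argument is correct and in one respect sharper than the paper's exposition, the proposal as it stands omits the analytic substance of the theorem.
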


We break the proof in to several steps.

\begin{proposition}
  Suppose \( u_k \in \bZ^n \), \( k\in\bL \), are primitive,
  span~\( \bR^n \) and satisfy condition~\itref{item:b} of
  Theorem~\ref{thm:construction}.  Then
  \( \mathcal U = \Set{u_k\with k\in\bL} \) is finite.
\end{proposition}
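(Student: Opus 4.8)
The plan is to reduce condition~\itref{item:b}, which is phrased geometrically in terms of the flats, to a purely arithmetic statement about the integer vectors, and then to show that this statement confines $\mathcal U$ to a finite subset of a fixed lattice box.

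The key observation is that linear independence of the $u_k$ already controls the intersection pattern of the flats. If $u_{k(1)},\dots,u_{k(n)}$ are linearly independent, then the evaluation map $a \mapsto (a(u_{k(1)}),\dots,a(u_{k(n)}))$ from $\Hom(\bR^n,\iH) = \iH\otimes(\bR^n)^*$ to $\iH^n$ is a linear isomorphism (both sides have real dimension $3n$), so the corresponding flats meet in exactly one point and, having distinct normal directions, are automatically distinct. Hence~\itref{item:b} applies and yields the arithmetic reformulation I want: \emph{any} $n$ linearly independent vectors among the $u_k$ form a $\bZ$-basis of $\bZ^n$.

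With this in hand the rest is elementary linear algebra. Since the $u_k$ span $\bR^n$, some $n$ of them are linearly independent, hence by the reformulation a $\bZ$-basis; after a $\GL(n,\bZ)$ change of coordinates I may assume they are the standard basis $e_1,\dots,e_n$. Now let $u = \sum_j a_j e_j$ be any element of $\mathcal U$. For each $j$, replacing $e_j$ by $u$ gives a family of $n$ vectors whose determinant is exactly $a_j$. If $a_j \ne 0$ this family is linearly independent and therefore a $\bZ$-basis, forcing $\abs{a_j} = 1$. Thus every coordinate of every $u \in \mathcal U$ lies in $\Set{-1,0,1}$, so $\mathcal U \subseteq \Set{-1,0,1}^n$ and is finite.

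I expect the only delicate point to be the first step: one must verify that linear independence alone forces the flats both to intersect and to be distinct, so that~\itref{item:b} can be invoked with no hypothesis on the $\lambda_k$ whatsoever. Once the condition has been recast in this $\lambda$-free form, the determinant computation bounding the coordinates is routine.
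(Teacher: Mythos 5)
Your proposal is correct and follows essentially the same route as the paper: both reduce condition~\itref{item:b} to the $\lambda$-free statement that any $n$ linearly independent vectors among the $u_k$ form a $\bZ$-basis (using that independence forces the corresponding flats to be distinct and to meet in a single point), then normalise by $\GL(n,\bZ)$ to the standard basis and use the determinant $\det A_j = \pm u_j$ to confine every coordinate to $\Set{-1,0,1}$. Your treatment of the ``delicate point'' (distinctness of the flats) is merely made more explicit than in the paper, which leaves it implicit.
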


\begin{proof}
  Note that if \( u_{k(1)},\dots,u_{k(n)} \) are linearly independent
  then \( \bigcap_{j=1}^n H_{k(j)} \) is a single point.  Thus
  \itref{item:b} implies that \( \mathcal U \) contains a
  \( \bZ \)-basis \( v_1,\dots,v_n \) for \( \bZ^n \).  Then matrix
  \( A \) with columns \( v_1,\dots,v_n \) is invertible with inverse
  in~\( M_n(\bZ) \), so \( A \) lies in
  \( \GL(n,\bZ) = \Set{B\in M_n(\bZ) \with \det B = \pm1} \).
  Multiplying with \( A^{-1} \) we may thus assume for the purpose of
  this proof that \( \mathcal U \) contains the standard basis
  \( \mathbf e_1,\dots,\mathbf e_n \).

  Suppose \( u = (u_1,\dots,u_n) \in \mathcal U \) is different from
  \( \mathbf e_i \), for all \( i=1,\dots,n \).  For
  \( j \in \Set{1,\dots,n} \), consider the matrix \( A_j \) with
  columns
  \( u,\mathbf e_1,\dots,\widehat{\mathbf e_j},\dots,\mathbf e_n \),
  so \( \mathbf e_j \) is omitted.  We have \( \det A_j = \pm u_j \).
  If \( \det A_j \) is non-zero, then its columns are linearly
  independent and the discussion above gives \( \det A_j = \pm 1 \).
  It follows that \( u_j \in \Set{-1,0,1} \) for
  each~\( j \in \Set{1,\dots,n}\).  In particular, there are only
  finitely many such \( u \)'s.
  \qed
\end{proof}

It follows that under condition~\itref{item:b}, the set
\( \Set{\norm{u_k} \with k\in\bL} \) is bounded, so
\eqref{eq:conv-lambda} and \eqref{eq:convergence} are equivalent.  Now
the map \( \beta\colon \lie t_\lambda \to \bR^n \) has Riesz
representation
\( \beta(t) = \inp{t}\gamma_{\lambda,\lie t} = \sum_{k\in\bL} (1 +
\abs{\lambda_k}) t_k\gamma_k \) given by
\( \gamma_k = u_k/(1+\abs{\lambda_k}) \).  As
\( \norm{\gamma}_{\lambda,\lie t}^2 = \sum_{k\in\bL}
(1+\abs{\lambda_k})^{-1}\norm{u_k}^2 \), boundedness
of~\( \norm{u_k} \) and \eqref{eq:conv-lambda} show that \( \gamma \)
lies in~\( \lie t_\lambda \).  Thus \( \beta \) is continuous and it
follows that \( N_\beta \) is a Hilbert subgroup of~\( T_\lambda \) of
codimension~\( n \).

\begin{lemma}
  If conditions \itref{item:a} and \itref{item:b} hold then the group
  \( N_\beta \) acts freely on \( \mu_\beta^{-1}(0) \).
\end{lemma}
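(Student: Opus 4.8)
The plan is to show that the stabiliser in \( N_\beta \) of every point of \( \mu_\beta^{-1}(0) \) is trivial, by reducing this to a combinatorial statement about the flats and then deducing that statement from \itref{item:a} and \itref{item:b}.

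First I would compute stabilisers in \( T_\lambda \).  Since the action is \( gx = (g_kx_k)_{k\in\bL} \), an element \( g \) fixes \( x \) precisely when \( g_k = 1 \) for every \( k \) with \( x_k \neq 0 \), so \( \stab_{T_\lambda}(x) \) is the coordinate torus supported on \( Z = \Set{k \in \bL \with x_k = 0} \).  For \( x \in \mu_\beta^{-1}(0) \) the preceding lemma identifies \( Z = \Set{k \with a \in H_k} \) for the associated \( a \in \iH \otimes (\bR^n)^* \), and \itref{item:a} gives \( \abs Z \leqslant n \), so this torus is finite-dimensional.  As \( N_\beta \leqslant T_\lambda \), we have \( \stab_{N_\beta}(x) = N_\beta \cap \stab_{T_\lambda}(x) \); writing \( g_k = \exp(2\pi\mathbf i s_k) \) with \( s_k \in \bR \) for \( k \in Z \), the image of such a \( g \) under \( T_\lambda \to T^n \) vanishes exactly when \( \sum_{k\in Z} s_k u_k \in \bZ^n \).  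Thus freeness at \( x \) is equivalent to the implication that \( \sum_{k\in Z} s_k u_k \in \bZ^n \) forces \( s_k \in \bZ \) for all \( k \), which in turn is equivalent to \( \Set{u_k \with k \in Z} \) being part of a \( \bZ \)-basis of \( \bZ^n \).

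It therefore suffices to prove: if finitely many distinct flats \( H_k \), \( k \in S \), have a common point, then \( \Set{u_k \with k \in S} \) extends to a \( \bZ \)-basis of~\( \bZ^n \).  I would first establish linear independence of these \( u_k \).  If they were dependent, pick a maximal independent subset indexed by \( k_1,\dots,k_r \), \( r < \abs S \); then \( P = \bigcap_{j=1}^r H_{k_j} \) is an affine subspace on which each remaining constraint \( a'(u_{k_l}) = \lambda_{k_l} \) (\( l > r \)) holds identically, because \( u_{k_l} \in \Span\Set{u_{k_1},\dots,u_{k_r}} \) and the common point certifies the correct constant, so \( P \subseteq H_{k_l} \).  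Extending \( u_{k_1},\dots,u_{k_r} \) to a basis using vectors of the finite set \( \mathcal U \) and intersecting the corresponding \( n \) flats yields a single point \( a^* \in P \) lying on those \( n \) flats together with the \( \abs S - r \geqslant 1 \) flats \( H_{k_l} \) containing \( P \); since all \( H_k \) are distinct and these indices are distinct, this gives at least \( n+1 \) distinct flats through \( a^* \), contradicting \itref{item:a}.  Granted independence, \( \abs S = m \leqslant n \), and I extend \( \Set{u_k \with k \in S} \) to a basis by adjoining \( u_{l_1},\dots,u_{l_{n-m}} \) drawn from \( \mathcal U \); the \( n \) corresponding distinct flats then meet in a single point, so \itref{item:b} forces them to be a \( \bZ \)-basis, whence \( \Set{u_k \with k \in S} \) is part of a \( \bZ \)-basis.

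The main obstacle is the independence step: one must rule out configurations in which the vectors \( u_k \) indexed by the flats through a point are linearly dependent.  The key observation that breaks this is that a linear dependence forces the redundant flats to contain the whole intersection subspace \( P \), so that completing to \( n \) flats through a point of \( P \) stacks strictly more than \( n \) flats at a single point and violates \itref{item:a}.  Once freeness is known pointwise on \( \mu_\beta^{-1}(0) \), the lemma follows.
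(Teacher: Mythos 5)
Your proof is correct and follows essentially the same approach as the paper: compute \( \stab_{T_\lambda}(x) \) as the coordinate torus supported on \( Z = \Set{k \with x_k = 0} \), identify \( Z \) with the set of flats through the image point, bound \( \abs Z \leqslant n \) by \itref{item:a}, and deduce triviality of \( \stab_{N_\beta}(x) \) from the fact that \( \Set{u_k \with k \in Z} \) is part of a \( \bZ \)-basis of~\( \bZ^n \). If anything, you are more thorough than the paper at the one delicate point: when \( \abs Z < n \) condition \itref{item:b} does not apply verbatim, and your sub-argument --- linear independence of the \( u_k \), \( k \in Z \), via \itref{item:a} (a dependence relation would force \( n+1 \) distinct flats through one point), followed by extension to \( n \) distinct flats meeting at a point using that \( \mathcal U \) spans \( \bR^n \) --- is exactly the justification the paper leaves implicit when it asserts that \itref{item:b} gives the \( \bZ \)-basis property.
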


\begin{proof}
  Given a subset \( \bK \subset \bL \), we define the subgroup \(
  T_{\bK}\) subgroup of \( T_\lambda \) by
  \begin{equation*}
    T_{\bK} = \Set{(g_k)_{k\in\bL} \in T_\lambda \with g_\ell = 1 \ \forall
    k \notin \bK},
  \end{equation*}
  so that the Lie algebra of \( T_{\bK} \) is spanned by
  \( \Set{e_k\with k\in \bK} \).  For \( x \in M_\Lambda \), the
  stabiliser \( \stab_{T_\lambda}(x) \) is \( T_{\bK} \) where
  \( \bK = \Set{k\with x_k=0} \).

  Now consider \( x \in \mu_\beta^{-1}(0) \).
  Equation~\eqref{eq:im-beta} implies that \( x_k = 0 \) if and only
  if \( \phi(\pi(x)) \in H_k \), where
  \( \pi\colon \mu_\beta^{-1}(0) \to M \) is the quotient map.  Thus
  \( \stab_{T_\lambda}(x) = T_{\bK(x)} \), where
  \( \bK(x) = \Set{k\with \phi(\pi(x)) \in H_k} \).
  Condition~\itref{item:a} implies that \( \bK(x) \) contains at
  most~\( n \) elements. The stabiliser of \( x \) under~\( N_\beta \)
  consists of those elements in \( \stab_{T_\lambda}(x) \) that lie in
  the kernel of the map \( T_\lambda \to T^n \) induced
  by~\( \beta \).  But \( \beta(e_k) = u_k \) and implies that
  \( (g_k)_{k\in\bL} \in T_{\bK(x)} \) maps to
  \( h = \prod_{k\in\bK(x)} g_k\exp(u_k) = \prod_{k\in\bK(x)}
  \exp(i\theta_ku_k) \in T^n \), where \( g_k = e^{i\theta_k} \).

  Condition~\itref{item:b} implies that the \( u_k \),
  \( k\in \bK(x) \), are part of a \( \bZ \)-basis for~\( \bZ^n \), so
  we may change basis via an element of \( \GL(n,\bZ) \) so that the
  \( u_k \) become the first~\( r \) basis elements.  Then
  \( h \)~becomes
  \( \diag(e^{i\theta(1)},\dots,e^{i\theta(r)},1,\dots,1) \), where
  \( \theta(j) \) is a corresponding relabelling of the
  \( \theta_k \)'s.  It follows that \( \theta_k \in 2\pi\bZ \) and so
  \( g_k = 1 \) for each \( k\in \bK(x) \).  Thus
  \( \stab_{N_\beta}(x) \) is trivial, as claimed.
  \qed
\end{proof}

As in \cite{Goto:A-infinity}, for \( x \in M_\Lambda \), let
\( X_x\colon \lie t_\lambda \to T_xM_\Lambda \) be the map sending an
element to the corresponding tangent vector at~\( x \) generated by
the action:
\begin{equation*}
  X_x(t) = \left.\frac d{ds} (\exp(st)x)\right|_{s=0} = \bigl(\mathbf i
  t_kx_k\bigr)_{k\in\bL}.
\end{equation*}

\begin{lemma}
  For \( x \in \mu_\beta^{-1}(0) \), the map \( X_x \) induces a
  linear homeomorphism from \( \lie n_\beta \) to the tangent space \(
  T_x(N_\beta\cdot x) \) of the \( N_\beta \)-orbit through~\( x \).
\end{lemma}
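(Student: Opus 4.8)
The plan is to read the statement as a purely analytic assertion about a single bounded linear map. By definition the tangent space $T_x(N_\beta\cdot x)$ is the image of the differential at the identity of the orbit map $g\mapsto g\cdot x$, and that differential is exactly $X_x$ restricted to $\lie n_\beta=\ker\beta$. So it suffices to show that $X_x\colon\lie n_\beta\to\bL^2(\bH)$ is bounded, injective, and \emph{bounded below}; a bounded-below injective bounded operator out of the Hilbert space $\lie n_\beta$ automatically has closed image and restricts to a linear homeomorphism onto it, which is the conclusion. I would treat boundedness, injectivity, and the coercive estimate in turn.

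For boundedness I would use that $x\in\mu_\beta^{-1}(0)$, so by \eqref{eq:im-beta} there is $a\in\iH\otimes(\bR^n)^*$ with $\tfrac12\abs{x_k}^2=\abs{a(u_k)-\lambda_k}$ for all $k$. Since $\mathcal U$ is finite, the norms $\norm{u_k}$, and hence the values $\abs{a(u_k)}$, are bounded by some $C_0$, giving $\abs{x_k}^2\leqslant 2(C_0+\abs{\lambda_k})\leqslant 2(C_0+1)(1+\abs{\lambda_k})$. As $X_x(t)=(\mathbf i t_kx_k)_{k\in\bL}$, this yields $\norm{X_x(t)}^2=\sum_k t_k^2\abs{x_k}^2\leqslant 2(C_0+1)\norm{t}_{\lambda,\lie t}^2$, so $X_x$ is bounded. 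For injectivity, $X_x(t)=0$ forces $t_k=0$ whenever $x_k\neq0$, i.e.\ $t$ is supported on the finite set $\bK(x)=\Set{k\with x_k=0}$ (finite by \itref{item:a}); if moreover $t\in\ker\beta$ then $\sum_{k\in\bK(x)}t_ku_k=0$, and since \itref{item:b} makes $\Set{u_k\with k\in\bK(x)}$ part of a $\bZ$-basis these vectors are linearly independent, forcing $t=0$.

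The heart of the proof is the coercive bound $\norm{X_x(t)}\geqslant c\norm{t}_{\lambda,\lie t}$ on $\lie n_\beta$. I would first establish a uniform estimate $\abs{x_k}^2\geqslant c_0(1+\abs{\lambda_k})$ for $k\notin\bK(x)$: the reverse triangle inequality gives $\tfrac12\abs{x_k}^2=\abs{a(u_k)-\lambda_k}\geqslant\abs{\lambda_k}-C_0$, which yields the bound once $\abs{\lambda_k}\geqslant2C_0$, and by \eqref{eq:conv-lambda} all but finitely many $k$ satisfy this; for the remaining finitely many $k\notin\bK(x)$ one has $\abs{x_k}>0$, so a uniform positive constant survives. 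Writing $P$ and $Q$ for the coordinate projections of $\lie t_\lambda$ onto $\bK(x)$ and its complement, this gives $\norm{X_x(t)}^2=\sum_{k\notin\bK(x)}t_k^2\abs{x_k}^2\geqslant c_0\norm{Qt}_{\lambda,\lie t}^2$. It then remains to dominate $Pt$ by $Qt$ on $\ker\beta$: there $\beta(Pt)=-\beta(Qt)$, while $\beta$ is bounded and its restriction to the finite-dimensional subspace spanned by $\Set{e_k\with k\in\bK(x)}$ is injective, again by \itref{item:b}, hence bounded below. Combining these gives $\norm{Pt}_{\lambda,\lie t}\leqslant C\norm{Qt}_{\lambda,\lie t}$, and together with the previous inequality this produces the required coercive bound.

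Finally the functional-analytic conclusion is routine: $X_x|_{\lie n_\beta}$ is bounded, injective, and bounded below, so its image is closed and $X_x$ restricts to a linear homeomorphism of $\lie n_\beta$ onto it; since that image is $T_x(N_\beta\cdot x)$, the lemma follows. I expect the coercive estimate of the third paragraph to be the main obstacle: the inequality has to survive the vanishing of the finitely many coordinates $x_k$ with $k\in\bK(x)$, and the device that makes it work is to use the constraint $\beta(t)=0$ together with the linear independence coming from \itref{item:b} to transfer coercivity from the complement of $\bK(x)$ back onto $\bK(x)$ itself.
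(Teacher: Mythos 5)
Your proof is correct, and its skeleton matches the paper's --- split \( \lie t_\lambda \) into the coordinates in \( \bK(x) \) and their complement, obtain coercivity off \( \bK(x) \) from a lower bound on \( \abs{x_k} \), then use condition \itref{item:b} to control the \( \bK(x) \)-block --- but both main estimates are carried out by genuinely different means. For the pointwise bounds on \( \abs{x_k} \) you use the moment-map equation \eqref{eq:im-beta} together with finiteness of \( \mathcal U \); the paper instead writes \( x = \Lambda + v \) with \( v \in \bL^2(\bH) \) and compares \( \abs{\Lambda_k} \) with \( \abs{v_k} \), using \eqref{eq:conv-lambda}, which gives continuity of \( X_x \) at every \( x \in M_\Lambda \) rather than only on \( \mu_\beta^{-1}(0) \) --- immaterial for this lemma. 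The substantive difference is how coercivity is transferred across \( \bK(x) \). You prove \( \norm{Pt}_{\lambda,\lie t} \leqslant C\norm{Qt}_{\lambda,\lie t} \) on \( \ker\beta \) by the explicit chain \( c_1\norm{Pt}_{\lambda,\lie t} \leqslant \norm{\beta(Pt)} = \norm{\beta(Qt)} \leqslant \norm\beta\,\norm{Qt}_{\lambda,\lie t} \), where \( \beta \) is bounded below on the finite-dimensional space \( V_x = \Span\Set{e_k \with k\in\bK(x)} \) because it is injective there (condition \itref{item:b}) and all norms on a finite-dimensional space are equivalent. The paper reaches the same inequality non-constructively: the orthogonal projection \( \pr^\bot \) away from \( V_x \) restricts to a continuous linear bijection \( \pr_\beta \) of \( \lie n_\beta \) onto a closed subspace of finite codimension, so the Open Mapping Theorem makes \( \pr_\beta^{-1} \) continuous, i.e.\ \( \norm{Qt}_{\lambda,\lie t} \geqslant \norm{t}_{\lambda,\lie t}/\norm{\pr_\beta^{-1}} \). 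Your route is more elementary --- no Open Mapping Theorem, no verification that an image is closed --- and yields explicit constants; the paper's is shorter once closedness is established, at the cost of the inexplicit constant \( \norm{\pr_\beta^{-1}} \). Both arguments rest on exactly the same two inputs: \( \bK(x) \) has at most \( n \) elements, by \itref{item:a}, and \( \beta|_{V_x} \) is injective, by \itref{item:b}. (Incidentally, your separate injectivity paragraph is redundant, since the coercive bound already implies injectivity.)
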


\begin{proof}
  For general \( x \in M_\Lambda \), we have
  \( \norm{X_x(t)}^2 = \norm{(t_kx_k)_{k\in\bL}}^2 = \sum_{k\in\bL}
  \abs{t_k}^2\abs{{\Lambda_k + v_k}}^2 \).  Except for finitely many
  \( k\in\bL \), we have \( \abs{\Lambda_k} \geqslant 2\abs{v_k} \),
  so for these \( k \), we have
  \( \abs{\Lambda_k + v_k}^2 \leqslant \abs{3\Lambda_k/2}^2 = 9
  \abs{\lambda_k} / 8 \).  It follows that there is a
  constant~\( C_x \), independent of~\( t \), such that
  \( \norm{X_x(t)} \leqslant C_x \norm{t}_{\lambda_,\lie t} \).  Thus
  \( X_x\colon \lie t_\lambda \to T_xM_\Lambda \) is continuous.

  Now let
  \( \bK_1 = \bL \setminus \Set{k \in \bL \with \abs{\Lambda_k}
  \geqslant 1 \geqslant 2\abs{v_k}} \), which is a finite set
  by~\eqref{eq:conv-lambda} and the condition that
  \( v\in\bL^2(\bH) \).  For \( k \notin \bK_1 \), we have
  \( \abs{\lambda_k} \geqslant 1/2\), so
  \( \abs{x_k}^2 = \abs{\Lambda_k + v_k}^2 \geqslant
  \abs{\Lambda_k/2}^2 = \abs{\lambda_k}/8 \geqslant
  (1+\abs{\lambda_k})/32 \).  It follows that for
  \( k \notin \bK(x) = \Set{k\with x_k=0} \), there is a constant
  \( c_x > 0 \) such that
  \( \abs{t_kx_k}^2 \geqslant c_x(1+\abs{\lambda_k})\abs{t_k}^2 \).

  For \( x \in \mu_\beta^{-1}(0) \), the set \( \bK(x) \) coincides
  with the previous definition
  \( \bK(x) = \Set{k\with \phi(\pi(x)) \in H_k} \) and so contains at
  most \( n \) elements.  Let
  \( V_x = \Span\Set{e_k\with k\in \bK(x)} \leqslant \lie t_\lambda \)
  and write \( \pr^\bot\colon \lie t_\lambda \to V_x^\bot \) for the
  orthogonal projection away from~\( V_x \).  Then \( \beta \) is
  injective on~\( V_x \), so \( \pr^\bot \) is a continuous linear
  bijection
  \( \pr_\beta\colon \lie n_\beta \to \pr^\bot(\lie n_\beta) \).  The
  image is the orthogonal complement to
  \( V_x \oplus \beta^\dag(\beta(V_x)^\bot) \), where
  \( \beta(V_x)^\bot \) is the orthogonal complement in~\( \bR^n \).
  As \( \beta \) is surjective, its adjoint~\( \beta^\dag \) is
  injective, so \( \pr^\bot(\lie n_\beta) \) is of finite codimension
  and thus a Hilbert subspace of~\( \lie t_\lambda \).  By the Open
  Mapping Theorem, we conclude that \( \pr_\beta^{-1} \) is continuous,
  and we note that its norm is non-zero.

  Now for \( x \in \mu_\beta^{-1}(0) \) and \( t \in \lie n_\beta \),
  we have
  \begin{equation*}
    \begin{split}
      \norm{X_x(t)}^2
      = \norm{(t_kx_k)_{k\in\bL}}^2
      &\geqslant
        c_x\sum_{k\notin\bK(x)}(1+\abs{\lambda_k})\abs{t_k}^2
        = c_x \norm{\pr_\beta(t)}_{\lambda,\lie t}^2\\
      &\geqslant \frac{c_x}{\norm{\pr_\beta^{-1}}^2}
        \norm{t}_{\lambda,\lie t}^2,
    \end{split}
  \end{equation*}
  showing that \( X_x \) has continuous inverse on
  \( T_x(N_\beta\cdot x) \).
  \qed
\end{proof}

It now follows, as in \cite{Goto:A-infinity}, that for
\( x \in \mu_\beta^{-1}(0) \), the differential
\( d\mu_\beta\colon T_xM_\Lambda \to \iH \otimes \lie n_\beta^* \) is
split, with right inverse the \( \bR \)-linear map given by
\( \mathbf a\otimes \delta = AX_x(t) \), where
\( \delta = \inp t\any_{\lambda,\lie t} \) and
\( A = a_1I + a_2J + a_3K \) for
\( \mathbf a = a_1\mathbf i + a_2\mathbf j + a_3\mathbf k \).  This
implies that \( \mu_\beta^{-1}(0) \) is a smooth Hilbert submanifold
of~\( M_\Lambda \).  On \( \mu_\beta^{-1}(0) \), Goto's construction
\cite{Goto:A-infinity} of slices~\( S_x \) goes through unchanged: one
considers the map \( F_x\colon \mu_\beta^{-1}(0) \to \lie n_\beta^* \)
given by \( F_x(x+w)(t) = \inp w{X_x(t)} \) and puts
\( S_x = F_x^{-1}(0) \cap U \), for a sufficiently small
neighbourhood~\( U \) of~\( x \).  Thus
\( M(\beta,\lambda) = \mu_\beta^{-1}(0) / N_\beta \) is a smooth
manifold.

Fix a point
\( q \in \iH\otimes(\bR^n)^* \setminus\bigcup_{k\in\bL} H_k \).  For
\( x \in \mu_\beta^{-1}(0) \) with \( \phi\pi(x) = q \), we have that
\( x_k \ne 0 \) for all \( k \in \bL \).  Thus \( T_\lambda \) acts
freely on~\( x_k \).  As \( e_k \in \lie t_\lambda \) for
each~\( k\in\bL \), it follows that
\( e_k \in F = T_x(T_\lambda\cdot x) \), and that
\( T_XM_\Lambda = F \oplus IF \oplus JF \oplus KF \).  As \( \beta \)
is surjective, we conclude that \( F/T_x(N_\beta\cdot x) \) is of
dimension~\( n \) and that
\( M(\beta,\lambda) = \mu_\beta^{-1}(0)/N_\beta \) is of
dimension~\( 4n \).

The standard considerations of the hyperKähler quotient construction
shows that \( M(\beta,\lambda) \) inherits a smooth hyperKähler
structure, completing the proof of Theorem~\ref{thm:construction}.

Just as in Hattori \cite{Hattori:Ainfty-volume}, one may use the
\( T_\lambda \) action to show that different choices of
\( (\Lambda_k)_{k\in\bL} \) yielding the same
\( (\lambda_k)_{k\in\bL} \) result in hyperKähler structures that are
isometric via a tri-holomorphic map.

\subsection{Classification of complete hypertoric manifolds}
\label{sec:class-hypert-manif}

Now suppose that \( M \) is an arbitrary complete connected hypertoric
manifold of dimension~\( 4n \) and write \( G = T^n \).  Bielawski
\cite[\S4]{Bielawski:tri-Hamiltonian} shows that locally \( M \) has
much of the structure of the hypertoric manifolds constructed above.

Indeed for each \( p \in M \), we may find a \( G \)-invariant
neighbourhood of the form \( U = G \times_H W \), where
\( H = \stab_G(p) \) and \( W = T_p(G\cdot p)^\bot \subset T_xM \).
Now \( H \)~acts trivially on \( W_1 = (\iH) T_p(G\cdot p) \), and
effectively as an Abelian subgroup of \( \SP(r) \) on the orthogonal
complement \( W_2 = W \cap W_1^\bot \cong \bH^r \).  Counting
dimensions, it follows that \( H \)~acts as \( T^r \) on~\( W_2 \),
and hence \( H \) is connected.  The image of the singular orbits
in~\( U \) is a union of distinct flats \( H_k = H(u_k,\lambda_k) \)
as in~\eqref{eq:flat}, where \( u_k \) may be chosen to lie in
\( \bZ^n \subset \bR^n = \lie g \) and be primitive vectors.  The
collection \( \Set{u_k\with \mu(p) \in H_k} \) is then part of a
\( \bZ \)-basis for~\( \bR^n \) spanning the Lie algebra of~\( H \).
Furthermore, examining the structure of~\( \mu \) on such a
neighbourhood~\( U \), Bielawski shows that \( \mu \)~induces a local
homeomorphism \( M/G \to \iH \otimes \lie g^* \cong \bR^{3n} \).  In
particular, the hyperKähler moment map
\( \mu\colon M \to \iH \otimes \lie g^* \) is an open map.

Let \( \Set{H_k\with k\in \bL} \) be the collection of all flats that
arise in this way.  The index set~\( \bL \) is finite or countably
infinite, since \( M \) is second countable.

\begin{lemma}
  \label{lem:alpha-a}
  Suppose \( \alpha \in (\bZ^n)^* \subset (\bR^n)^* = \lie g^* \) is
  non-zero.  Let \( T_\alpha \) be the subtorus of \( G = T^n \) whose
  Lie algebra is spanned by
  \( \ker\alpha = \Set{u \in \lie g \with \alpha(u) = 0} \).  For
  \( a \in \iH \otimes \lie g^* \), write
  \( [a]_\alpha = a + \iH\otimes \bR\alpha \) for the equivalence
  class of \( a \) in \( \iH\otimes (\lie g^*/\bR\alpha) \).  Then
  except for countably many choices of~\( [a]_\alpha \), the
  group~\( T_\alpha \) acts freely on~\( \mu^{-1}([a]_\alpha) \).
\end{lemma}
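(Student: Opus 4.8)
The plan is to recognise \( \pi_\alpha\circ\mu \) as the hyperKähler moment map of \( T_\alpha \), and then to bound the image of the locus where \( T_\alpha \) has non-trivial isotropy. Since \( \lie t_\alpha=\ker\alpha \), restriction of linear functionals identifies \( \lie t_\alpha^*=\lie g^*/\bR\alpha \), so \( \mu_\alpha:=\pi_\alpha\circ\mu\colon M\to\iH\otimes(\lie g^*/\bR\alpha) \) is a moment map for \( T_\alpha \) and \( \mu^{-1}([a]_\alpha)=\mu_\alpha^{-1}([a]_\alpha) \). As \( G \) is Abelian, \( \mu_\alpha \) is \( T_\alpha \)-invariant, so the set of \emph{bad} classes---those \( [a]_\alpha \) on whose fibre \( T_\alpha \) does not act freely---is exactly \( \mu_\alpha(\Sigma) \), where \( \Sigma=\{\,p\in M:\stab_{T_\alpha}(p)\neq\{e\}\,\} \). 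The assertion thus reduces to showing that \( \mu_\alpha(\Sigma) \) is countable.

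Next I would read off the isotropy from the local flat structure recalled above. For \( p\in M \) the group \( \stab_G(p) \) is the subtorus with Lie algebra \( V_p=\Span\{u_k:\mu(p)\in H_k\} \), where the \( u_k \) occurring form part of a \( \bZ \)-basis of \( \bZ^n \); in particular at most \( n \) flats pass through \( \mu(p) \). Writing \( S(p)=\{k:\mu(p)\in H_k\} \), the intersection \( \stab_{T_\alpha}(p)=\stab_G(p)\cap T_\alpha \) is non-trivial exactly when \( V_p\cap(\ker\alpha+\bZ^n)\not\subseteq\bZ^n \); this splits into a positive-dimensional case, \( V_p\cap\ker\alpha\neq 0 \), and a finite-isotropy case coming from lattice torsion, and in either case depends only on the finite set \( S(p) \) and on \( \alpha \). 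Since \( \mathcal U=\{u_k\} \) is finite while the flats are only countable, there are just countably many \( S\subseteq\bL \) with \( \bigcap_{k\in S}H_k\neq\varnothing \); letting \( \mathcal S \) be those for which this isotropy condition holds, we obtain the countable union \( \mu_\alpha(\Sigma)\subseteq\bigcup_{S\in\mathcal S}\pi_\alpha(\bigcap_{k\in S}H_k) \).

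The crux, and the step I expect to be the main obstacle, is to prove that each \( \pi_\alpha(\bigcap_{k\in S}H_k) \) with \( S\in\mathcal S \) is itself countable, ideally a single point. As \( \bigcap_{k\in S}H_k \) is an affine subspace with direction \( \iH\otimes\{u_k:k\in S\}^0 \), its \( \pi_\alpha \)-image collapses to a point precisely when the annihilator \( \{u_k:k\in S\}^0 \) lies in \( \bR\alpha \), that is when \( \Span\{u_k:k\in S\}\supseteq\ker\alpha \). The difficulty is that the bare isotropy condition forces \( \Span\{u_k:k\in S\} \) only to \emph{meet} \( \ker\alpha \) (or to create lattice torsion), and a proper such intersection would project to a positive-dimensional set of bad classes. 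I would therefore have to rule out these intermediate configurations in a single fibre: here I expect to need the unimodularity of the \( u_k \) through a point, so as to control the torsion contributions via the integers \( \alpha(u_k) \), together with completeness of \( M \) and the earlier convergence estimates, to stop the flats accumulating so as to sweep out such a subspace. Securing this implication is the essential content; granted it, countability of \( \mu_\alpha(\Sigma) \), and hence the lemma, follows at once.
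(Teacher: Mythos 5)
Your reduction is correct, and you have put your finger on exactly the right spot: the bad classes are precisely \( \mu_\alpha(\Sigma) \), isotropy is governed by \( V_p = \Span\Set{u_k \with \mu(p) \in H_k} \) together with lattice torsion, exactly as you say, and everything hinges on the size of the projections \( \pi_\alpha\bigl(\bigcap_{k\in S}H_k\bigr) \). But the implication you defer --- that the ``intermediate'' configurations can be ruled out --- is false, so your plan cannot be completed. Take \( M = \bH^3 \) with its standard \( T^3 \)-action, so \( H_k = \Set{a \with a(\mathbf e_k) = 0} \), \( u_k = \mathbf e_k \), and take \( \alpha = \mathbf e_1^* + \mathbf e_2^* + \mathbf e_3^* \). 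Every point \( (q_1,0,0) \) with \( q_1 \ne 0 \) has \( T_\alpha \)-stabiliser the circle \( \Set{(1,z,\bar z) \with z \in S^1} \), so the bad set contains \( \pi_\alpha(H_2 \cap H_3) \), a \( 3 \)-dimensional affine subspace of the \( 6 \)-dimensional space of classes: uncountably many bad classes, even though \( \mathbf e_2,\mathbf e_3 \) are part of a \( \bZ \)-basis, \( M \) is flat and complete, and there are only three flats. The torsion case is equally unavoidable: for \( M = \bH^2 \) and \( \alpha = \mathbf e_1^* + 2\mathbf e_2^* \) one has \( T_\alpha = \Set{(\bar z^2,z) \with z \in S^1} \); since \( \alpha(u_2) = 2 \ne 0 \), \emph{every} class \( [a]_\alpha \) meets \( H_2 \), hence every fibre contains a point \( (q_1,0) \), which is fixed by the non-trivial element \( (1,-1) \in T_\alpha \). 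Here no class whatsoever is good.

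So the gap you acknowledge is genuine, but it is not yours to fill: the statement fails as literally phrased, and the paper's own proof does not surmount the two obstacles you isolated --- it passes over them. The paper discards only the classes with \( [a]_\alpha \subset H_k \) for some \( k \) with \( u_k \in \ker\alpha \), asserts that \( T_\alpha \) then acts almost freely (in the \( \bH^3 \) example above no \( u_k \) lies in \( \ker\alpha \), so nothing is discarded and the assertion fails), and finally upgrades almost-free to free on the grounds that the \( T^n \)-stabilisers are connected --- an invalid inference, since a subtorus can meet a connected stabiliser in a non-trivial finite group: \( T_\alpha \cap (\Set1 \times S^1) = \Set{(1,\pm 1)} \cong \bZ_2 \) in the \( \bH^2 \) example. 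What does survive is exactly what your scheme proves without the missing step: the classes over which \( T_\alpha \) fails to act \emph{almost} freely lie in the countable union of the subspaces \( \pi_\alpha\bigl(\bigcap_{k\in S}H_k\bigr) \) with \( \abs{S} \geqslant 2 \) or with \( u_k \in \ker\alpha \), each affine of codimension at least \( 3 \); this union is countable only for \( n \leqslant 2 \), but it always has dense complement, which is what the later surjectivity argument needs. Genuine freeness requires in addition \( \abs{\alpha(u_k)} \leqslant 1 \) whenever \( H_k \) meets \( [a]_\alpha \); otherwise \( M(a,\alpha) \) is an orbifold --- consistent, in fact, with the unequal weights \( \abs{\hat\alpha(\hat u_k)} \) appearing in the paper's later formula \eqref{eq:V-alpha}. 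Your hope that unimodularity, completeness and the convergence estimates would eliminate the intermediate configurations is the one part of your plan to abandon.
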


Note that \( (\ker\alpha)^* = \lie g^*/\bR\alpha \).

\begin{proof}
  Consider the intersection \( [a]_\alpha \cap H_k \).  A general
  point of \( [a]_\alpha \) is \( a + q\otimes\alpha \),
  \( q \in \iH \), which lies in \( H_k = H(u_k,\lambda_k) \) only if
  \( a(u_k) + q\alpha(u_k) = \lambda_k \).  If \( \alpha(u_k) \ne 0 \)
  this equation has a unique solution for \( q \); if
  \( \alpha(u_k) = 0 \) then there is a solution only if
  \( a(u_k) = \lambda_k \) and then \( [a]_\alpha \subset H_k \).
  Thus choosing \( a(u_k) \ne \lambda_k \) for each \( k\in \bL \),
  ensures that \( [a]_\alpha \cap H_k \) is empty for every \( k \)
  for which \( u_k \in \ker\alpha \).  It follows that~\( T_\alpha \)
  acts almost freely on~\( \mu^{-1}([a]_\alpha) \), but as each
  stabiliser of the \( T^n \)-action is connected, we find that
  \( T_\alpha \) acts freely.
  \qed
\end{proof}

\begin{corollary}
  For \( \alpha \), \( T_\alpha \) and \( a \) as in
  Lemma~\ref{lem:alpha-a}, the hyperKähler quotient
  \( M(a,\alpha) = \mu^{-1}([a]_\alpha) / T_\alpha \) is a complete
  hyperKähler manifold of dimension four.  Furthermore,
  \( M(a,\alpha) \) carries an effective tri-Hamiltonian circle
  action.
\end{corollary}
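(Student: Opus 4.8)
The plan is to realise \( M(a,\alpha) \) as the hyperKähler quotient of \( M \) by the subtorus \( T_\alpha \) at the level \( [a]_\alpha \), and then to verify smoothness, the dimension count, completeness and the residual symmetry in turn. Writing \( \lie t_\alpha = \ker\alpha \) for the Lie algebra of \( T_\alpha \) and \( \iota_\alpha\colon\lie t_\alpha\hookrightarrow\lie g \) for the inclusion, the hyperKähler moment map for the \( T_\alpha \)-action is \( \mu_\alpha = (\mathrm{id}\otimes\iota_\alpha^*)\circ\mu\colon M\to\iH\otimes\lie t_\alpha^* \), where \( \lie t_\alpha^* = \lie g^*/\bR\alpha \). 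Under this identification \( \mu^{-1}([a]_\alpha) = \mu_\alpha^{-1}([a]_\alpha) \), so \( M(a,\alpha) \) is exactly \( M\hkq T_\alpha \) at the level \( [a]_\alpha \). Throughout write \( P = \mu^{-1}([a]_\alpha) \) and \( \pi\colon P\to M(a,\alpha) \) for the quotient map.

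First I would establish smoothness and compute the dimension. By Lemma~\ref{lem:alpha-a} the group \( T_\alpha \) acts freely on \( P \). Freeness is in particular local freeness, and for a tri-Hamiltonian torus action local freeness at \( x \) forces \( (d\mu_\alpha)_x \) to be surjective: the annihilator of its image consists of triples \( (\xi_I,\xi_J,\xi_K)\in\lie t_\alpha^{\,3} \) with \( I\xi_{I}+J\xi_{J}+K\xi_{K}=0 \) at \( x \), and the tri-Hamiltonian orthogonality \( \dim\bH\mathcal G_x = 4\dim\mathcal G_x \) established in Section~\ref{sec:abelian-actions} forces each \( \xi_A \) to vanish. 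Hence \( [a]_\alpha \) is a regular value, \( P \) is a smooth submanifold of real codimension \( 3(n-1) \), and the standard hyperKähler quotient construction (as already invoked for Theorem~\ref{thm:construction}) endows \( M(a,\alpha) = P/T_\alpha \) with a smooth hyperKähler structure of dimension \( 4n - 4(n-1) = 4 \).

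The substantial point is completeness, and this is where I expect the main work. Since \( T_\alpha \) is compact and acts freely, \( \pi\colon P\to M(a,\alpha) \) is a principal \( T_\alpha \)-bundle and, for the quotient metric, a Riemannian submersion; moreover \( P \) is closed in \( M \), being the preimage of an affine subspace under the continuous map \( \mu \). I would not try to argue that \( P \) is totally geodesic, since it is not. Instead, take a unit-speed geodesic \( \overline\gamma\colon[0,\ell)\to M(a,\alpha) \) defined on a maximal interval and suppose \( \ell<\infty \). Horizontal lifts in a principal bundle exist over the whole parameter interval, so \( \overline\gamma \) lifts to a horizontal curve \( \gamma\colon[0,\ell)\to P \); as \( \pi \) is a Riemannian submersion, \( \gamma \) is unit speed in the induced metric on \( P \) and hence of finite length \( \ell \) in \( M \). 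A finite-length curve in the complete manifold \( M \) is Cauchy, so \( \gamma(t) \) converges to some \( x\in M \) as \( t\to\ell \); as \( P \) is closed we have \( x\in P \), and then \( \overline\gamma(t)=\pi(\gamma(t))\to\pi(x) \). By the escape lemma a maximal geodesic with \( \ell<\infty \) must eventually leave every compact subset of \( M(a,\alpha) \), which \( \overline\gamma \) does not; this contradiction gives \( \ell=\infty \), so \( M(a,\alpha) \) is complete. The points needing care are the existence of the horizontal lift on all of \( [0,\ell) \), which uses compactness of \( T_\alpha \), and the fact that \( P \), as an embedded closed submanifold, carries the subspace topology, so that convergence in \( M \) yields convergence in \( P \).

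Finally I would produce the circle action. As \( G=T^n \) is Abelian its coadjoint action is trivial, so \( \mu \) is \( G \)-invariant and each element of \( G \) preserves \( P \); since \( T_\alpha\subset G \) acts trivially on the quotient, the residual torus \( T^n/T_\alpha\cong S^1 \) acts on \( M(a,\alpha) \). This action is tri-holomorphic and isometric, being induced from the \( G \)-action, and it is tri-Hamiltonian with moment map the descent of the \( \bR\alpha=(\lie t_\alpha)^0 \)-component of \( \mu \), giving \( M(a,\alpha)\to\iH\otimes\bR\alpha\cong\bR^3 \). For effectiveness I would use that the local structure recalled at the start of Section~\ref{sec:class-hypert-manif} makes the \( T^n \)-stabiliser trivial at any \( p\in P \) with \( \mu(p) \) lying on no flat \( H_k \); since \( [a]_\alpha \) meets each \( H_k \) with \( u_k\notin\ker\alpha \) in a single point, such \( p \) are generic in \( P \). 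If \( g\in G \) acted trivially on \( M(a,\alpha) \) it would fix the \( T_\alpha \)-orbit of every such \( p \), forcing \( g\in T_\alpha \); hence \( S^1=T^n/T_\alpha \) acts effectively, completing the proof.
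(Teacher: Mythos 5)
Your proof is correct and follows essentially the same route as the paper: realise \( M(a,\alpha) \) as the hyperKähler quotient of \( M \) by \( T_\alpha \) at level \( [a]_\alpha \), use freeness (Lemma~\ref{lem:alpha-a}) plus compactness of \( T_\alpha \) for smoothness, deduce completeness of the quotient from completeness of \( M \) via the closed level set, and obtain the tri-Hamiltonian circle action from the residual \( T^n/T_\alpha \)-action with moment map the descent of \( \mu \). The only difference is level of detail: you spell out the surjectivity of \( d\mu_\alpha \), the horizontal-lift completeness argument, and the effectiveness of the circle action, points the paper subsumes in the citation to \cite{Hitchin-KLR:hK} or leaves implicit.
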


\begin{proof}
  As \( T_\alpha \) is compact and acts freely on
  \( \mu^{-1}([a]_\alpha) \), it follows that \( M(a,\alpha) \) is
  hyperKähler \cite{Hitchin-KLR:hK}.  Completeness of~\( M \) implies
  completeness of the level set \( \mu^{-1}([a]_\alpha) \) and hence
  of the hyperKähler quotient.

  As \( \mu \) is~\( T^n \)-invariant, the level set
  \( \mu^{-1}([a]_\alpha) \) is preserved by~\( T^n \) and we get an
  action of the circle \( T^n/T_\alpha \) on the quotient.
  Identifying \( [a]_\alpha \) with \( \iH \) via
  \( a + q\otimes \alpha \mapsto q \), the restriction of \( \mu \) to
  \( \mu^{-1}([a]_\alpha) \) descends to a hyperKähler moment map for
  this action.
  \qed
\end{proof}

From the four-dimensional classification Theorem~\ref{thm:4d}, we find
have that the moment map of~\( M(a,\alpha) \) surjects on
to~\( \iH \).  Interpreting this in terms of the moment map \( \mu \)
of~\( M \), we have that \( [a]_\alpha \) lies in the image
of~\( \mu \).  But on~\( M \) the moment map \( \mu \) is an open
map. And, as
\( \bigcup \Set{[a]_\alpha\with a(u_k)\ne \lambda_k \forall k: u_k \in
\ker\alpha} \) is dense in~\( \iH\otimes\bR^n \), we conclude that
\( \mu \) is a surjection.

Furthermore, the metric on \( M(a,\alpha) \) is given by a potential
of the form~\eqref{eq:V3} up to an overall positive scale.  The
elements of \( Z \) are just the intersection points of
\( [a]_\alpha \cap H_k \), for \( u_k \notin \ker\alpha \).  These are
the points \( a_k = a + q_k\otimes \alpha \) with
\( q_k = (\lambda_k - a(u_k))/\alpha(u_k) \).  Now for
\( p \in \mu^{-1}([a]_\alpha) \), writing
\( \mu(p) = a + q \otimes \alpha \), we have
\begin{equation*}
  \begin{split}
    \mu(p) - a_k
    &= (q - q_k)\otimes \alpha
    = \frac{q\alpha(u_k) - (\lambda_k - a(u_k))}{\alpha(u_k)} \otimes
    \alpha\\
    &= (\inp{\mu(p)}{u_k} - \lambda_k) \otimes \frac\alpha{\alpha(u_k)}.
  \end{split}
\end{equation*}
Thus the potential for \( M(a,\alpha) \) is proportional to
\begin{equation*}
  V_\alpha(p) = c + \frac12 \sum_{k\in\bL} \frac
  1{\norm{\inp{\mu(p)}{u_k}-\lambda_k}}
  \frac{\abs{\alpha(u_k)}}{\norm\alpha},
\end{equation*}
where we may include the terms with \( u_k \in \ker\alpha \), since
they contribute zero, and \( \norm\alpha \) is the norm of \( \alpha
\) with respect to the standard inner product from the
identification~\( \lie g = \bR^n \).

Using this inner product we may identify \( \lie g\) with
\( \lie g^* \).  Then the function
\begin{equation*}
  r_k(b) = \norm{b(u_k)-\lambda_k}/\norm{u_k} = \norm{b(\hat u_k) -
  \hat \lambda_k},
\end{equation*}
where \( \hat u_k = u_k/\norm{u_k} \) and
\( \hat\lambda_k = \lambda_k/\norm{u_k} \), corresponds to the
distance of \( b \) from the flat~\( H_k \).  We may thus write
\begin{equation}
  \label{eq:V-alpha}
  V_\alpha(p) = c + \frac12 \sum_{k\in\bL}
  \frac{\abs{\hat\alpha(\hat u_k)}}{r_k(\mu(p))},
\end{equation}
for \( \hat\alpha = \alpha/\norm\alpha \).

Now choose \( a \) so that \( a(u_k) \ne \lambda_k \) for all
\( k \in \bL \).  Then for \( p \in \mu^{-1}(a) \) we have that
\( V_\alpha \) in~\eqref{eq:V-alpha} is finite for each non-zero
integral \( \alpha \in \lie g^* \).  As each unit vector
\( \hat u \in \lie g \cong \bR^n \) has
\( \inp{\hat u}{\mathbf e_i} \geqslant 1/\sqrt n \) for some
\( i \in \Set{1,\dots,n} \), we conclude that
\( \sum_{k\in\bL} 1/r_k(\mu(p)) \) converges.  In particular the
distance of \( \mu(p) \) to \( H_k \) is bounded below by a uniform
constant.  It follows that there is an open neighbourhood~\( U \) of
\( \mu(p) \) in \( \mu(M) \subset \iH\otimes\lie g^* \) for which
\( a(u_k) \ne \lambda_k \) for all \( a \in U \).

Let \( M_U \) be a connected component of~\( \mu^{-1}(U) \).  Then
\( T^n \) acts freely on~\( M_U \) and the hyperKähler structure on
\( M_U \) is uniquely determined via a polyharmonic function \( F \)
on \( U \subset \iH\otimes\lie g^* \) as follows.  The hyperKähler metric
is of the form
\begin{equation*}
  g = \sum_{i,j=1}^n (V^{-1})_{ij}\beta_0^i\beta_0^j +
  V_{ij}(\alpha_I^i\alpha_I^j + \alpha_J^i\alpha_J^k + \alpha_K^i\alpha_K^j),
\end{equation*}
where for \( X_1,\dots,X_n \) is a basis for \( \bR^n = \lie g \), we
have \( \alpha_A^i = X_i \hook \omega_A \),
\( (V^{-1})_{ij} = (g(X_i,X_j)) \) and \( \beta_0^1,\dots,\beta_0^n \)
are the \( C^\infty(M_U) \)-linear combinations of
\( g(X_1,\any),\allowbreak \dots,\allowbreak g(X_n,\any) \) such that
\( \beta_0^i(X_j) = \delta^i_j \).  As a result of Pedersen and Poon
\cite{Pedersen-Poon:Bogomolny} and the Legendre transform of Lindström
and Ro\v cek \cite{Lindstrom-R:scalar,Hitchin-KLR:hK} the functions
\( V_{ij} \) on \( U \subset \iH \otimes \lie g^* \) are
\emph{polyharmonic} meaning that they are harmonic on each affine
subspace \( a + \iH \otimes \bR\alpha \),
\( \alpha \in \lie g^*\setminus\Set0 \).  Furthermore this matrix of
functions is given by a single polyharmonic function
\( F\colon U \to \bR \) via \( V_{ij} = F_{x_ix_j} \), where we choose
a unit vector \( \mathbf e \in \iH \) and \( (x_1,\dots,x_n) \) are
standard coordinates on
\( \bR^n = \bR\mathbf e\otimes\lie g^* \subset \iH\otimes\lie g^* \).
We write
\begin{equation*}
  s_k(b) = \inp{\mathbf e}{b(\hat u_k) - \hat\lambda_k}.
\end{equation*}
As \( \mathbf e \) acts on \( \mathbf e^\bot \subset \iH \) as a
complex structure, we may choose corresponding standard complex
coordinates \( (z_1,\dots,z_n) \) on
\( \mathbf e^\bot \otimes \lie g^* \).  A potential \( V \) of the
form~\eqref{eq:V3} is then \( V = F_{xx} \) with
\begin{equation*}
  F(x,z) = \frac14 c(2x^2 - \abs z^2) + \frac12\sum_{k\in\bL}
  (s_k\log(s_k+r_k) - r_k).
\end{equation*}
As in Bielawski \cite{Bielawski:tri-Hamiltonian}, we now deduce that
for \( M_U \) the function~\( F \) has the form
\begin{equation*}
  F = \sum_{k\in \bL} a_k(s_k\log(s_k+r_k) - r_k) + \sum_{i,j=1}^n
  c_{ij}(4x_ix_j - z_i\overline z_j - z_j\overline z_i)
\end{equation*}
for some real constants \( a_k \), \( c_{ij} \).  As Bielawski
explains the \( c_{ij} \) terms are from a \emph{Taub-NUT deformation}
of a metric determined by the first sum, that is there is a hypertoric
manifold~\( M_2 \) and
\( M_U = M_2 \times (S^1\times\bR^3)^m \hkq T^m \) with \( T^m \)
acting effectively on the product of \( S^1 \)-factors, trivially on
the \( \bR^3 \)-factors and as a subgroup of~\( T^n \) on~\( M_2 \).
By analyticity, the hyperKähler metric on \( M_U \) determines the
hyperKähler metric on~\( M \).

Using Bielawski's techniques and the computations of
\cite{Bielawski-Dancer:toric}, one may now conclude that~\( M_2 \)
comes from the construction of the previous section.  In particular,
we note that the \( a_k \)'s are bounded and convergence of
\( V_\alpha(p) \) in~\eqref{eq:V-alpha} for each non-zero
integral~\( \alpha \) corresponds to the
condition~\eqref{eq:convergence}.  To see this first we remark that
convergence of the \( V_\alpha(p) \)'s corresponds to convergence of
\( R(b) = \sum_{k\in\bL}1/r_k(b) \) for \( b = \mu(p) \): this follows
from \( \abs{\hat\alpha(\hat u_k)} \leqslant 1 \) and
\( \inp{\hat u_k}{\mathbf e_i} \geqslant 1/\sqrt n \) for
some~\( i \).  Now
\( r_k(b) \leqslant \abs{b(u_k)} + \abs{\hat\lambda_k} \leqslant
(1+\norm b)(1+\abs{\hat\lambda_k}) \) gives that convergence
of~\( R(b) \) implies~\eqref{eq:convergence}.  Conversely, note
that~\eqref{eq:convergence} implies that
\( \abs{\hat\lambda_k} < 1 +2\norm b \) for only finitely many
\( k\in\bL \).  But for
\( \abs{\hat\lambda_k} \geqslant 1+2\norm b \), we have
\( r_k(b) \geqslant (\abs{\hat\lambda_k}+1)/2 \), as
in~\eqref{eq:ineq}, so we get convergence of~\( R(b) \).

We have thus proved the following result.

\begin{theorem}
  Let \( M \) be a connected hypertoric manifold of
  dimension~\( 4n \).  Then \( M \) is a product
  \( M = M_2 \times (S^1\times\bR^3)^m \) with \( M_2 \) a hypertoric
  manifold of the type constructed in~\S\ref{sec:constr-hypert-manif},
  i.e., the hyperKähler quotient of flat Hilbert hyperKähler manifold
  by an Abelian Hilbert Lie group.  The hyperKähler metric in \( M \)
  is either the product hyperKähler metric or a Taub-NUT deformation
  of this metric.
  \qed
\end{theorem}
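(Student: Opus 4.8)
The plan is to assemble the local analysis carried out above into the stated global product decomposition, so the argument is really a matter of organising the ingredients in the right order. First I would invoke Bielawski's local structure theorem \cite{Bielawski:tri-Hamiltonian}: around any \( p\in M \) the manifold is modelled on \( G\times_H W \), the images of the singular orbits form a countable family of flats \( \Set{H_k\with k\in\bL} \), and \( \mu \) is an open map inducing a local homeomorphism \( M/G\to\iH\otimes\lie g^* \).  This reduces the global geometry to the combinatorial and potential-theoretic data carried by the flats.

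Second, I would pass to dimension four.  For a non-zero integral \( \alpha\in\lie g^* \) and a generic class \( [a]_\alpha \), Lemma~\ref{lem:alpha-a} and its Corollary produce a complete four-dimensional hyperKähler quotient \( M(a,\alpha)=\mu^{-1}([a]_\alpha)/T_\alpha \) with a tri-Hamiltonian circle action.  Feeding each such slice into the four-dimensional classification Theorem~\ref{thm:4d} forces its potential to have the form~\eqref{eq:V3}; surjectivity of the four-dimensional moment map, combined with openness of \( \mu \) and density of the admissible classes \( [a]_\alpha \), then upgrades to surjectivity of \( \mu \) on all of \( M \).  The same slicing pins down the one-variable potential \( V_\alpha \) of~\eqref{eq:V-alpha}.

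Third, I would promote these one-dimensional data to the full matrix \( V_{ij} \) on a locus \( M_U \) where \( T^n \) acts freely.  By Pedersen--Poon \cite{Pedersen-Poon:Bogomolny} and the Legendre transform of Lindström and Ro\v cek \cite{Lindstrom-R:scalar,Hitchin-KLR:hK}, \( V_{ij}=F_{x_ix_j} \) for a single polyharmonic \( F \), and following Bielawski this \( F \) is forced into the explicit shape
\begin{equation*}
  F = \sum_{k\in\bL} a_k\bigl(s_k\log(s_k+r_k) - r_k\bigr)
    + \sum_{i,j=1}^n c_{ij}\bigl(4x_ix_j - z_i\overline z_j - z_j\overline z_i\bigr)
\end{equation*}
with real constants \( a_k, c_{ij} \).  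The quadratic \( c_{ij} \)-terms are exactly a Taub--NUT deformation, which splits off the \( (S^1\times\bR^3)^m \)-factor and isolates a core piece \( M_2 \) governed by the logarithmic sum.

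I expect the main obstacle to be identifying this core \( M_2 \) with the construction of~\S\ref{sec:constr-hypert-manif}.  This requires showing that the coefficients \( a_k \) are bounded and, crucially, that convergence of every \( V_\alpha \) in~\eqref{eq:V-alpha} is equivalent to the convergence condition~\eqref{eq:convergence}, which I would establish using two-sided estimates on \( r_k(b) \) exactly as in~\eqref{eq:ineq}.  Granting this, the Bielawski--Dancer computations \cite{Bielawski-Dancer:toric} realise \( M_2 \) as the hyperKähler quotient \( M_\Lambda\hkq N_\beta \) of Theorem~\ref{thm:construction}.  Finally, since hyperKähler metrics are real-analytic, the structure determined on the single free piece \( M_U \) propagates to all of \( M \), yielding the global product together with the dichotomy between the product metric and its Taub--NUT deformation.
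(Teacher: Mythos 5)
Your proposal follows the paper's own proof essentially step for step: Bielawski's local structure and openness of \( \mu \), reduction to four-dimensional quotients \( M(a,\alpha) \) fed into Theorem~\ref{thm:4d}, the polyharmonic function \( F \) via Pedersen--Poon and the Legendre transform with Bielawski's explicit form, the Taub--NUT splitting of the \( c_{ij} \)-terms, the equivalence of convergence of the \( V_\alpha \) with condition~\eqref{eq:convergence} via the estimates of~\eqref{eq:ineq}, and finally real-analyticity to propagate from \( M_U \) to all of \( M \). This is the same argument, correctly organised, so no further comparison is needed.
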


From the proof and the construction of the previous section, we have
the following properties of the hyperKähler moment map of in this
situation.

\begin{corollary}
  If \( M \) is a connected complete hyperKähler manifold of
  dimension~\( 4n \) with an effective tri-Hamiltonian action of~\(
  T^n \), then the hyperKähler moment map \( \mu\colon M \to \iH
  \otimes (\bR^n)^* \cong \bR^{3n} \) is surjective with connected
  fibres.
  \qed
\end{corollary}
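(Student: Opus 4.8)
The plan is to extract both assertions from the structure Theorem proved just above, which presents \( M \) as a product \( M = M_2 \times (S^1\times\bR^3)^m \), with \( M_2 \) a hypertoric manifold of the type built in~\S\ref{sec:constr-hypert-manif} carrying a tri-Hamiltonian \( T^{n-m} \)-action, the remaining circles acting on the \( S^1 \)-factors. Surjectivity is in fact already established before that Theorem: combining the four-dimensional quotients \( M(a,\alpha) \) with Theorem~\ref{thm:4d}, the openness of~\( \mu \) and the density of the admissible lines~\( [a]_\alpha \) forces \( \mu(M) = \iH\otimes\lie g^* \). So I would concentrate on the connectedness of the fibres, for which I would show that every fibre of~\( \mu \) is a single \( T^n \)-orbit.

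First I would treat the two building blocks under the product metric. On the constructed factor~\( M_2 \) the moment map induces a homeomorphism \( M_2/T^{n-m} \to \iH\otimes(\bR^{n-m})^* \) (here I invoke the conclusion recorded for the constructed manifolds in~\S\ref{sec:constr-hypert-manif}), so each of its fibres is precisely a \( T^{n-m} \)-orbit, hence connected. On each factor \( S^1\times\bR^3 \) the moment map is, by the \( Z=\varnothing \), \( c>0 \) case of Theorem~\ref{thm:4d}, the projection onto the second factor, whose fibres are the circles \( S^1\times\{q\} \) and so are connected too. Since the hyperKähler moment map of a product is the direct sum of the moment maps of the factors under the splitting \( \iH\otimes(\bR^n)^* \cong \iH\otimes(\bR^{n-m})^*\oplus\iH\otimes(\bR^m)^* \), a fibre of~\( \mu \) is a product of the corresponding fibres; a product of connected sets is connected, and this simultaneously recovers surjectivity.

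The step I expect to be the main obstacle is the \emph{Taub-NUT deformation}, since the structure Theorem only guarantees the product decomposition as a \( T^n \)-manifold, while the metric may be a Taub-NUT deformation of the product metric. I would dispose of this by noting that the deformation, realised as a hyperKähler quotient \( \widetilde M_2\times(S^1\times\bR^3)^m \hkq T^m \) by a torus acting on the \( S^1 \)-factors, trivially on the \( \bR^3 \)-factors, and as a subtorus of~\( T^n \) on~\( \widetilde M_2 \), changes the hyperKähler metric only in the torus directions and leaves both the \( T^n \)-action and the underlying flat arrangement \( \Set{H_k} \) intact. Consequently the induced map on the orbit space \( M/T^n \to \iH\otimes\lie g^* \) remains a homeomorphism, so its fibres are still single, connected \( T^n \)-orbits; together with the surjectivity already in hand this gives the Corollary. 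Completeness enters precisely through the structure Theorem, which is what lets me replace the a priori only local homeomorphism of Bielawski by the global one on each constructed factor. The point requiring care is to verify that it is this orbit-space homeomorphism, rather than the pointwise formula for~\( \mu \), that is preserved under the deformation, so that the identification of fibres with orbits genuinely survives.
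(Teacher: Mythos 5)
Your proposal is correct and takes essentially the same route as the paper, whose own proof of this corollary is simply a pointer back to the preceding material: surjectivity via the four-dimensional quotients \( M(a,\alpha) \), openness of \( \mu \) and density of the admissible lines, and connectedness because every fibre is a single \( T^n \)-orbit, using the orbit-space homeomorphism for the constructed manifolds \( M(\beta,\lambda) \), the explicit description of \( S^1\times\bR^3 \), and the hyperKähler-quotient realisation of the Taub-NUT deformation. Your explicit treatment of the Taub-NUT case (fibres upstairs are full \( T^n\times T^m \)-orbits contained in the zero level, so fibres downstairs are single residual-torus orbits) is exactly the point the paper leaves implicit.
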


\section{Cuts and modifications}

Symplectic cutting was introduced by Lerman in 1995
\cite{Lerman:cuts}. The construction starts with a symplectic manifold
with Hamiltonian circle action, and produces a new manifold of the
same type, but with different topology.  Explicitly, given $M$ with
circle action and associated moment map $\mu$, we form the symplectic
quotient at level~$\epsilon$,
\[
  \Mcut^{\epsilon} = (M \times \C) \symp_{\epsilon} S^1
\]
where the $S^1$ is the antidiagonal of the product action obtained
from the given action on $M$ and the standard rotation on $\C$. Note
that the moment map for the action on $\C$ is
$\phi \colon z \mapsto |z|^2$.

The new space $\Mcut$ is of the same dimension as $M$ and
inherits a circle action from the diagonal action on $M \times
\C$. Moreover, as
\[
  \Mcut = \Set{(m,z) \with \mu(m) - |z|^2 = \epsilon} / S^1
\]
we see that the points $\Set{ m \with \mu (m) < \epsilon}$ are
removed. Moreover, because $\phi \colon z \mapsto |z|^2$ is a trivial
circle fibration over $(0, \infty)$ with the circle fibre collapsing
to a point at the origin, we see that the region
$\Set{ m \with \mu(m) > \epsilon }$ remains unchanged, while the
hypersurface $\mu^{-1} (\epsilon)$ is collapsed by a circle action.

We can generalise this to the case of torus actions, by replacing $\C$
by a toric variety associated to a polytope $\Delta$. The region
$\mu^{-1} (\R^n \setminus (\Delta + \epsilon))$ will be removed, the
preimage of the $\epsilon$-translate of the interior of $\Delta$ is
unchanged, while collapsing by tori takes place on the preimage of
lower-dimensional faces of the translated polytope.

For general geometries, we want to mimic this construction by looking at the
appropriate quotient of $M \times N$ by an Abelian group~$G$, where
$N$ is a space whose reduction by $G$ is a point. The topological
change in $M$ will be controlled by the geometry of the moment map for
the $G$ action on $N$.

The simplest example is that of a hyperK\"ahler manifold with circle
action.  We now explain the hyperK\"ahler analogue of a cut in this
situation, which we call a \emph{modification} \cite{Dancer-S:mod}.
The natural choice of $N$ is now the quaternions $\HH$. Several new
features now emerge, because the hyperKähler moment map
$\mu \colon \HH \rightarrow \R^3$ has very different properties from
that for $\C$. In particular, it is \emph{surjective}, and is a
\emph{non-trivial} circle fibration over $\R^3 \setminus \Set0$; in
fact $\phi$ is given by the Hopf map on each sphere.  This means that
in forming the hyperK\"ahler quotient
$\Mmod = (M \times \HH) \hkq S^1$ we do \emph{not} discard any points
in $M$ (hence the use of the terminology modification rather than
cut!). We still collapse the locus $\mu^{-1}(\epsilon)$ by a circle
action, because $\phi$ is injective over the origin. The complements
$M \setminus \mu^{-1} (\epsilon)$ and
$\Mmod \setminus (\mu^{-1}(\epsilon)/S^1)$ can no longer be
identified, because of the non-triviality of the Hopf
fibration. Instead, the topology has been given a `twist'. An example
of this is if we start with $M = \HH$.  Now iterating the above
construction generates the Gibbons-Hawking $A_k$ multi-instanton
spaces, where the spheres at large distance are replaced by lens
spaces $S^3 / \Z_{k+1}$.

We can make this more precise by observing that the space
\[
  M_1 = \Set{(m, q) \in M \times \HH \with \mu(m) - \phi(q) = \epsilon}
\]
projects onto both $M$ and $\Mmod$. The first map is just projection
onto the first factor (onto as $\phi$ is surjective), while
the second is just the quotient map $M_1 \rightarrow \Mmod$.
Note that the first map is not quite a fibration: it has $S^1$ fibres
generically but over $\mu^{-1}(\epsilon)$ the fibres collapse to a
point.

On the open sets where both maps are fibrations, note that
\( M_1 \to \Mmod \) is a Riemannian submersion, but that the
projection to first factor is not.  As shown in
\cite{Swann:twist-mod}, the metric~\( \tilde g \) induced on~\( M \)
by \( M_1 \) has the form
\begin{equation}
  \label{eq:g-mod}
  \tilde g = g + V(\mu)g_{\bH},
\end{equation}
where \( g_{\bH} = \alpha_0^2 + \alpha_I^2 + \alpha_J^2 + \alpha_K^2
\), with \( \alpha_0 = g(X,\any) \), etc., and \( V(\mu) =
1/2\norm{\mu - \epsilon} \) is the potential of the flat hyperKähler
metric on~\( \bH \).

One may now generalise the hyperKähler modification, by replacing
\( \bH \) by any hypertoric manifold~\( N \) of dimension~\( 4 \).
The modification changes the metric as in~\eqref{eq:g-mod} with
\( V(\mu) \) now the potential function of~\( N \), so one of the
functions~\eqref{eq:V3}.  In \cite{Swann:twist-mod} it is proved that
metric changes of the form~\eqref{eq:g-mod} with \( V \) now an
arbitrary smooth invariant function on~\( M \), so called `elementary
deformations', only lead to new hyperKähler metrics when \( V \) is
\( \pm V(\mu) \) for some hypertoric~\( N^4 \).  The case of negative
\( V \) corresponds precisely to inverting a modification via a
positive~\( V \).

For general torus actions one can take $N$ to be a hypertoric manifold
and we get a similar picture to that above.

\section{Non-Abelian moment maps}

One can also consider cutting constructions for non-Abelian group
actions.  Now, because the diagonal and anti-diagonal actions no
longer commute, one considers the product of a $K$-manifold $M$ with a
space $N$ with $K \times K$ action and then reduces by the
antidiagonal action formed from the action on $M$ and (say) the left
action on $N$.

In the symplectic case, following Weitsman \cite{Weitsman:cuts}, with
$K = \Un(n)$ one can take $N = \Hom(\C^n, \C^n)$ with $K \times K$
action $A \mapsto UA V^{-1}$. The moment map for the right action is
$\mu \colon A \mapsto i A^* A$, with image $\Delta$ the set of
non-negative Hermitian matrices. We have a picture that is quite
reminiscent of the Abelian case, essentially because the fibres of
$\phi$ are orbits of the \emph{left} action. The right moment map
$\phi$ gives a trivial fibration with fibres $\Un(n)$ over the interior
of the image, while over the lower-dimensional faces of $\Delta$
(corresponding to non-negative Hermitian matrices that are not strictly
positive), the fibres are $\Un(n)/\Un(n-k)$ where $k$ is the number of
positive eigenvalues. This gives a nice non-Abelian generalisation of
the toric cuts described above. To form the cut space we remove the
complement of $\mu^{-1} (\Delta + \epsilon)$ and perform collapsing by
the appropriate unitary groups on the preimages of the
lower-dimensional faces of $\Delta + \epsilon$.

In the hyperK\"ahler case life becomes more complicated.  An obvious
choice of $N$ is $\Hom(\C^n, \C^n) \oplus \Hom(\C^n, \C^n)^*$, with
action
\[
  (U, V) \colon (A, B) \mapsto (UAV^{-1}, VBU^{-1}).
\]
The hyperKähler moment map for the right $\Un(n)$-action is now
\[
  \mu\colon (A, B) \mapsto (\frac{\mathbf i}{2} (A^* A - B B^* ), BA)
  \in \iH \otimes \un(n)^* \cong \un(n) \oplus \gl(n, \C).
\]
In contrast to the symplectic case, or the Abelian hyperK\"ahler case,
the fibres of $\mu$ are no longer group orbits in general; in
particular the left $U(n)_L$ action need not be transitive and indeed
the quotient of a fibre by this action may have positive
dimension. The result is that when we perform the non-Abelian
hyperK\"ahler modification, blowing up of certain loci occurs.

This is closely related to the phenomenon that the fibres of
hyperK\"ahler moment maps over \emph{non-central} elements may have
larger than expected dimension, even on the locus where the group
action is free. This is because the kernel of the differential of a
moment map $\mu$ is the orthogonal of
$I {\mathcal G} + J {\mathcal G} + K {\mathcal G}$, where
$\mathcal G$, as in \S 2.1, denotes the tangent space to the
orbits of the group action.  For the fibre over
a central element this sum is direct (because $\mathcal G$ is tangent
to the fibre so is orthogonal to the sum, and hence the three summands
are mutually orthogonal), but over non-central elements this is no
longer necessarily true. The dimension of the fibre is now no longer
determined by the dimension of $\mathcal G$, and hence not determined
by the dimension of the stabiliser. Note that in the K\"ahler
situation the kernel of $d \mu$ is just the orthogonal of
$ I \mathcal G$, so here the dimension of the fibre is completely
controlled by the dimension of the stabiliser, even in the non-Abelian
case.

Another example of the unexpected behaviour of hyperK\"ahler moment
maps over non-central elements is the phenomenon of disconnected
fibres.  This is in contrast to symplectic moment maps, whose level
sets enjoy many connectivity properties, see \cite{Sjamaar:convexity}
and the references therein.  As a simple hyperK\"ahler example we may
consider the $SU(2)$ action on
$\HH^2 = \C^2 \times (\C^2)^* = T^*\bC^2$
\begin{equation}
  \label{eq:action-TCn}
  A \colon (z,w) \mapsto (Az, wA^{-1})
\end{equation}
with hyperKähler moment map
\begin{equation}
  \label{eq:mu-sun}
  \mu = (\mu_{\R}, \mu_{\C}) \colon (z,w) \mapsto \Bigl(\frac{\mathbf
  i}{2} (zz^\dag -w^\dag w)_0,  (zw)_0\Bigr),
\end{equation}
where $\any_0$ denotes trace-free part, and \( \any^\dag \) the
conjugate transpose. (This calculation arose in discussions with
S. Tolman).

We are interested in finding the fibre of $\mu$ over
$(\alpha, \beta) \in \su(2) \oplus \sln(2, \C) \cong \iH \otimes
\su(2)^*$. Using the $SU(2)$-equivariance, we may take
\[
  \beta =
  \begin{pmatrix}
    \lambda & \mu \\
    0       & -\lambda \\
  \end{pmatrix}
  .
\]
We find that the fibre is:
\begin{enumerate}[\upshape (1)]
\item empty or a disjoint pair of circles if $\lambda, \mu$ are both
  non-zero;
\item empty, a circle or a disjoint pair of circles if $\lambda$ is
  zero and $\mu$ non-zero;
\item empty or a disjoint pair of circles if $\mu$ is zero and
  $\lambda$ non-zero;
\item a disjoint pair of circles or a point if $\lambda = \mu = 0$
  (the point fibre occurs exactly over the origin
  $\alpha = \beta = 0$.
\end{enumerate}

Now let us turn to the the case of \( \SU(3) \).  This acts on
\( T^*\bC^3 = \bC^3\oplus (\bC^3)^* \) via \eqref{eq:action-TCn} and
has the same formula~\eqref{eq:mu-sun} for the hyperKähler moment
map~\( \mu\colon T^*\bC^3 \to \iH \otimes \su(3)^*\cong \su(3) \oplus
\Sl(3,\bC) \).  In particular \( \mu_\bC(z,w) \) has off-diagonal
\( (i,j) \)-entries \( z_iw_j \) whilst the diagonal entries are of
the form \( 2z_iw_i - \sum_{k\ne i} z_kw_k \).  Similarly, for
\( j>i \),
\( (\mu_\bR(z,w))_{ij} = \tfrac{\mathbf i}2
(z_i\overline{z_j}-\overline{w_i}w_j) \) and
\begin{equation*}
  (\mu_\bR(z,w))_{ii} = \frac{\mathbf i}6
  \Bigl\{2(\abs{z_i}^2-\abs{w_i}^2) - \sum_{k\ne
  i}(\abs{z_k}^2-\abs{w_k}^2)\Bigr\}.
\end{equation*}

We consider the fibre \( \mu^{-1}(\alpha,\beta) \), for
\( \alpha\in\su(3) \), \( \beta\in\Sl(3,\bC) \).  Note that
\( \dim_\bC\Sl(3,\bC) = 8 \) is strictly greater than
\( \dim_\bC T^*\bC^3 = 6 \) so there are now restrictions on
\( \beta \) to lie in the image of~\( \mu_\bC \).  In particular, we
see that \( \mu \) is not surjective.

For more detail, note that the map \( \mu \) is
\( \SU(3) \)-equivariant and we may use the action to put \( \beta \)
in to the canonical upper triangular form
\begin{equation*}
  \beta =
  \begin{pmatrix}
    \lambda_1&\xi_1&\zeta\\
    0&\lambda_2&\xi_2\\
    0&0&\lambda_3
  \end{pmatrix}
  ,
\end{equation*}
with \( \lambda_1+\lambda_2+\lambda_3=0 \).  This gives the three
constraints
\begin{equation}
  \label{eq:zero}
  z_2w_1 = 0,\quad z_3w_1=0,\quad z_3w_2 = 0.
\end{equation}

\paragraph{Case 1}
\label{sec:case-1}
Both \( \xi_i\ne0 \): we have \( z_1w_2\ne0\ne z_2w_3 \) implying that
\( z_1,z_2, w_2,w_3 \) are non-zero and so \( \zeta \ne 0 \).
Equation~\eqref{eq:zero} gives \( w_1 = 0 = z_3\).  This implies
\begin{equation*}
  \lambda_1 = \lambda_3 = -\tfrac12\lambda_2 = -\tfrac13 z_2w_2.
\end{equation*}
Now \( z_1 \) determines the remaining variables via
\begin{equation*}
  w_2 = \frac{\xi_1}{z_1},\quad w_3 = \frac \zeta{z_1},\quad z_2 =
  \frac{\xi_2}{w_3} = \frac{\xi_2}\zeta z_1
\end{equation*}
giving the relation
\begin{equation*}
  3\lambda_2\xi_2 = 2\xi_1\zeta,
\end{equation*}
so \( \lambda_2\ne0 \).  Constraints on \( z_1 \) come from
\( \mu_\bR \) and using the above relations they are seen to only
involve linear combinations of \( x=\abs{z_1}^2 \) and \( 1/x \).
Thus there are at most \( 2 \) values for \( \abs{z_1} \).  However,
closer inspection reveals that the entries above the diagonal
in~\( \mu_\bR \) are
\begin{equation*}
  \begin{pmatrix}
    *&p\abs{z_1}^2&0\\
    *&*&q/\abs{z_1}^2\\
    *&*&*
  \end{pmatrix}
  ,
\end{equation*}
with \( p = \overline{\xi_2/\zeta} \), \( q = \overline \zeta \xi_1 \).
Thus \( \alpha_{13}=0 \), \( 4\alpha_{12}\alpha_{23} = - pq = -
\overline{\xi_2}\xi_1 \) and there is at most one solution for \(
\abs{z_1} \), which together with \( \beta \) specifies the diagonal
entries of \( \alpha \).  Thus the fibre is either a circle or empty.

\paragraph{Case 2}
\label{sec:case-2}
\( \xi_1\ne0,\xi_2=0 \): we have \( z_1\ne0\ne w_2 \) and thus \(
z_3=0 \).  Now \( z_2w_1=0=z_2w_3 \) divides into cases.

(a) \( z_2\ne0,w_1=0=w_3 \): gives
\( \lambda_2 = \tfrac23 z_2w_2 = -2\lambda_1 = -2\lambda_3 \ne 0 \).
\( z_2 = 3\lambda_2/2w_2 \), \( z_1 = \xi_1/w_2 \).  In \( \alpha \),
the \( (1,2) \)-entry determines \( z_1\overline z_2 \ne 0 \) which
specifies \( \abs{w_2} \) uniquely.  So the fibre is either a circle
or empty.

(b) \( w_1\ne0, z_2=0 \): gives \( \lambda_1 = \tfrac23 z_1w_1
= -2\lambda_2 = -2\lambda_3 \ne 0 \), \( \zeta = z_1w_3 \).  So \( w_1 =
3\lambda / 2z_1 \), \( w_2 = \xi_1 / z_1 \), \( w_3 = \zeta / z_1 \).
The \( (1,2) \) entry of \( \alpha \) then specifies \( \abs{z_1} \)
uniquely and the fibre is a single circle.

(c) \( w_1=0=z_2 \): gives \( \lambda_i=0 \) for all \( i \) and
\( \zeta = z_1w_3 \), so \( w_2 = \xi_1/z_1 \), \( w_3 = \zeta/z_1 \).
For \( \zeta\ne0 \), \( \abs{z_1} \) is determined by
\( \alpha_{23} \) and the fibre is a circle or empty.  For
\( \zeta = 0 \), the only non-zero entries are the diagonal ones, with
a the first entry \( -2 \) times the other two which are equal.  These
give a single quadratic equation for \( \abs{z_1} \), so the fibre is
either \( 2 \) circles, \( 1 \) circle or empty.  For example, the
first diagonal entry in \( \mu_\bR \) is proportional to
\( 2\abs{z_1}^2 + \abs{\xi_1}^2/\abs{z_1}^2 \), which attains any
sufficiently large positive value at two different values of
\( \abs{z_1} \).  In particular, the fibre of \( \mu \) can be
disconnected.

\paragraph{Case 3}
\label{sec:case-3}
\( \xi_1=0=\xi_2=\zeta \): there are three types of case:

(a) two \( z \)'s non-zero: \( z_1\ne0\ne z_2 \) implies \( w \equiv 0
\) and \( \beta=0 \).  The off-diagonal entries of \( \alpha \)
determine \( z_2 \) and \( z_3 \) in terms of \( z_1 \).  If \(
\alpha_{23}\ne0 \), then this determines \( \abs{z_1} \) and the fibre
is empty or a circle.  Otherwise the diagonal entries lead to a
quadratic constraint in \( \abs{z_1} \).  The fibres are thus \( 2 \)
circles, one circle or empty.

(b) one \( z \) and one \( w \) non-zero: then these must have the
same index, say \( z_1\ne0\ne w_1 \).  So \( w_2=0=w_3=z_2=z_3 \).  \(
\beta \) is diagonal with two repeated eigenvalues, \( w_1 =
-3\lambda_1/2z_1 \).  \( \alpha \) is necessarily diagonal, the
diagonal entries give a quadratic constraint on \( \abs{z_1} \).  The
fibres are \( 2 \) circles, one circle or empty.

(c) one \( z \) or \( w \) non-zero: gives \( \beta=0 \), \( \alpha \)
diagonal with the sign of the entries in \( i\alpha  \) determined by
whether it is \( z \) or \( w \) that is non-zero.  The fibre is
either a circle, a point or empty.

\section{Implosion}

Implosion arose as an abelianisation construction in symplectic
geometry \cite{Guillemin-JS:implosion}.  Given a Hamiltonian
$K$-manifold $M$, one forms a new Hamiltonian space $M_{\mathrm{impl}}$
with an action of the maximal torus $T$ of $K$, such that the symplectic
reductions agree
\[
  M \symp_{\lambda} K = M_{\mathrm{impl}} \symp_{\lambda} T
\]
for $\lambda$ in the closed positive Weyl chamber. In most cases the
implosion is a singular stratified space, even in $M$ is smooth.

The key example is the implosion $(T^*K)_{\mathrm{impl}}$ of $T^*K$ by
(say) the right $K$ action.  Because $T^*K$ has a $K \times K$ action,
the implosion has a $K \times T$ action, and in fact we may implode a
general Hamiltonian $K$-manifold by forming the reduction of
$M \times (T^*K)_{\mathrm{impl}}$ by the diagonal $K$ action.  In this
sense $(T^*K)_{\mathrm{impl}}$ is a universal example for imploding
$K$-manifolds. Concretely, $(T^*K)_{\mathrm{impl}}$ is obtained from the
product of $K$ with the closed positive Weyl chamber $\bar{\tf}^*$ by
stratifying by the face of the Weyl chamber (i.e.\ by the centraliser
$C$ of points in $\bar{\tf}^*$) and then collapsing by the commutator
of~$C$.

There is also a more algebraic description of the universal implosion
$(T^*K)_{\mathrm{impl}}$, as the non-reductive Geometric Invariant Theory
(GIT) quotient $K_{\C} \symp N$, where $N$ is the maximal unipotent
subgroup.  Note that in general non-reductive quotients need not exist
as varieties, due to the possible failure of finite generation for the
ring of $N$-invariants. In the above case (and in the hyperK\"ahler
and holomorphic symplectic situation described below) it is a
non-trivial result that we do have finite generation, so the quotient
does exist as an affine variety.

In a series of papers
\cite{Dancer-KS:implosion,Dancer-KS:hypertoric,Dancer-KS:twistor} the
authors and Kirwan described an analogue of implosion in hyperK\"ahler
geometry. There is a hyperK\"ahler metric (due to Kronheimer
\cite{Kronheimer:cotangent}) with $K \times K$ action on the cotangent
bundle $T^* K_{\C}$, and the idea is that the analogue of the
universal symplectic implosion should be the complex-symplectic
quotient (in the GIT sense) of $T^*K_{\C}$ by $N$.  Explicitly, this
quotient is $(K_{\C} \times \n^{\circ}) \symp N$, which it is often
convenient to identify with $(K_{\C} \times \bmf ) \symp N$, where
$\bmf$ is the Borel subalgebra.

In the case of $K= SU(n)$ it was shown in \cite{Dancer-KS:implosion}
that $(K_{\C} \times \n^{\circ}) \symp N$ arises, via a quiver
construction, as a hyperK\"ahler quotient with residual $K \times T$
action. Moreover the hyperK\"ahler quotients by $T$ may be identified
with the Kostant varieties, that is the subvarieties of $\kf_{\C}$
obtained by fixing the values of a generating set of invariant
polynomials. For example, reducing at zero gives the nilpotent
variety. For general semi-simple $K$, results of Ginzburg and Riche
\cite{Ginzburg-R:affine-Grassmannian} give the existence of the
complex-symplectic quotient $(K_{\C} \times \n^{\circ}) \symp N$ as an
affine variety, and the complex-symplectic quotients by the $T_{\C}$
action again give the Kostant varieties.

There is a link here with some intriguing work by Moore and Tachikawa
\cite{Moore-T:TQFTs}. They propose a category $HS$ whose objects are
complex semi-simple groups, and where elements of
$\Mor (G_1, G_2)$ are complex-symplectic manifolds with
$G_1 \times G_2$ action (together with a commuting circle action that
acts on the complex-symplectic form with weight $-2$). We compose
morphisms $X \in \Mor(G_1, G_2)$ and $Y \in \Mor (G_2, G_3)$
by taking the complex-symplectic reduction of $X \times Y$ by the
diagonal $G_2$ action.  The Kronheimer space $T^*G$, where $G=K_{\C}$,
gives a canonical element of $\Mor (G, G)$---in fact this
functions as the identity in $\Mor (G, G)$.
The implosion now may be viewed as giving an element of $\Mor(G, T_{\C})$,
where $G= K_{\C}$ and $T_{\C}$ is the complex maximal torus in $G$.

\begin{acknowledgement}
  Andrew Swann partially supported by the Danish Council for
  Independent Research, Natural Sciences. We thank Sue Tolman for
  discussions about the disconnected fibres of hyperK\"ahler moment
  maps.
\end{acknowledgement}


\begin{thebibliography}{10}
\providecommand{\url}[1]{{#1}}
\providecommand{\urlprefix}{URL }
\expandafter\ifx\csname urlstyle\endcsname\relax
  \providecommand{\doi}[1]{DOI~\discretionary{}{}{}#1}\else
  \providecommand{\doi}{DOI~\discretionary{}{}{}\begingroup
  \urlstyle{rm}\Url}\fi

\bibitem{Alekseevskii-K:Ricci}
Alekseevski{\u\i}, D.V., Kimel'fel'd, B.N.: Structure of homogeneous
  {R}iemannian spaces with zero {R}icci curvature.
\newblock Funktsional. Anal. i Prilozhen. \textbf{9}(2), 5--11 (1975).
\newblock English translation \cite{Alekseevskii-K:Ricci-tr}

\bibitem{Alekseevskii-K:Ricci-tr}
Alekseevski{\u\i}, D.V., Kimel'fel'd, B.N.: Structure of homogeneous
  {R}iemannian spaces with zero {R}icci curvature.
\newblock Functional Anal. Appl. \textbf{9}(2), 97--102 (1975)

\bibitem{AndersonMT-KL:infinite}
Anderson, M.T., Kronheimer, P.B., LeBrun, C.: Complete {R}icci-flat {K\"a}hler
  manifolds of infinite topological type.
\newblock Comm. Math. Phys. \textbf{125}(4), 637--642 (1989)

\bibitem{Axler-BR:harmonic}
Axler, S., Bourdon, P., Ramey, W.: Harmonic function theory, \emph{Graduate
  Texts in Mathematics}, vol. 137, second edn.
\newblock Springer-Verlag, New York (2001)

\bibitem{Bielawski:tri-Hamiltonian}
Bielawski, R.: Complete hyper-{K\"a}hler {$4n$}-manifolds with a local
  tri\hyphen {H}amiltonian {$\mathbb R\sp n$}-action.
\newblock Math. Ann. \textbf{314}(3), 505--528 (1999)

\bibitem{Bielawski-Dancer:toric}
Bielawski, R., Dancer, A.S.: The geometry and topology of toric hyperk{\"a}hler
  manifolds.
\newblock Comm. Anal. Geom. \textbf{8}(4), 727--760 (2000)

\bibitem{Dancer-KS:twistor}
Dancer, A., Kirwan, F., Swann, A.F.: Twistor spaces for hyperk{\"a}hler
  implosions.
\newblock J. Differential Geom. \textbf{97}(1), 37--77 (2014)

\bibitem{Dancer-KS:implosion}
Dancer, A.S., Kirwan, F., Swann, A.F.: Implosion for hyperk{\"a}hler manifolds.
\newblock Compos. Math. \textbf{149}, 1592--1630 (2013)

\bibitem{Dancer-KS:hypertoric}
Dancer, A.S., Kirwan, F., Swann, A.F.: Implosions and hypertoric geometry.
\newblock J. Ramanujan Math. Soc. \textbf{28A}, 81--122 (2013).
\newblock Special issue for Professor Seshadri's 80th birthday.

\bibitem{Dancer-S:mod}
Dancer, A.S., Swann, A.F.: Modifying hyperk{\"a}hler manifolds with circle
  symmetry.
\newblock Asian J. Math. \textbf{10}(4), 815--826 (2006)

\bibitem{Gibbons-H:multi}
Gibbons, G.W., Hawking, S.W.: Gravitational multi-instantons.
\newblock Phys. Lett. \textbf{B78}, 430--432 (1978)

\bibitem{Ginzburg-R:affine-Grassmannian}
Ginzburg, V., Riche, S.: Differential operators on {$G/U$} and the affine
  {G}rassmannian.
\newblock J. Inst. Math. Jussieu \textbf{14}(3), 493--575 (2015)

\bibitem{Goto:A-infinity}
Goto, R.: On hyper-{K\"a}hler manifolds of type {$A_\infty$}.
\newblock Geometric and Funct. Anal. \textbf{4}, 424--454 (1994)

\bibitem{Guillemin-JS:implosion}
Guillemin, V., Jeffrey, L., Sjamaar, R.: Symplectic implosion.
\newblock Transform. Groups \textbf{7}(2), 155--184 (2002)

\bibitem{Hattori:Ainfty-volume}
Hattori, K.: The volume growth of hyper-{K\"a}hler manifolds of type
  {$A_\infty$}.
\newblock Journal of Geometric Analysis \textbf{21}(4), 920--949 (2011)

\bibitem{Hawking:gravitational}
Hawking, S.W.: Gravitational instantons.
\newblock Phys. Lett. A \textbf{60}(2), 81--83 (1977)

\bibitem{Hitchin-KLR:hK}
Hitchin, N.J., Karlhede, A., Lindstr{\"o}m, U., Ro{\v c}ek, M.: Hyper{K\"a}hler
  metrics and supersymmetry.
\newblock Comm. Math. Phys. \textbf{108}, 535--589 (1987)

\bibitem{Kronheimer:cotangent}
Kronheimer, P.B.: A hyper{K\"a}hler structure on the cotangent bundle of a
  complex {L}ie group.
\newblock Preprint \url{arXiv:math.DG/0409253} (1986)

\bibitem{Lerman:cuts}
Lerman, E.: Symplectic cuts.
\newblock Math. Res. Lett. \textbf{2}(3), 247--258 (1995)

\bibitem{Lindstrom-R:scalar}
Lindstr{\"a}m, U., Ro{\v c}ek, M.: Scalar tensor duality and {$N=1,\,2$}
  nonlinear {$\sigma$}-models.
\newblock Nuclear Phys. B \textbf{222}(2), 285--308 (1983)

\bibitem{Moore-T:TQFTs}
Moore, G.W., Tachikawa, Y.: On 2d {TQFT}s whose values are holomorphic
  symplectic varieties.
\newblock In: String-{M}ath 2011, \emph{Proc. Sympos. Pure Math.}, vol.~85, pp.
  191--207. Amer. Math. Soc., Providence, RI (2012)

\bibitem{Pedersen-Poon:Bogomolny}
Pedersen, H., Poon, Y.S.: Hyper-{K\"a}hler metrics and a generalization of the
  {B}ogomolny equations.
\newblock Comm. Math. Phys. \textbf{117}, 569--580 (1988)

\bibitem{Sjamaar:convexity}
Sjamaar, R.: Convexity properties of the moment mapping re-examined.
\newblock Adv. Math. \textbf{138}(1), 46--91 (1998)

\bibitem{Swann:twist-mod}
Swann, A.F.: Twists versus modifications.
\newblock Preprint \url{arXiv:1512.06343 [math.DG]} (2015)

\bibitem{Weitsman:cuts}
Weitsman, J.: Non-abelian symplectic cuts and the geometric quantization of
  noncompact manifolds.
\newblock Lett. Math. Phys. \textbf{56}(1), 31--40 (2001).
\newblock EuroConf{\'e}rence Mosh{\'e} Flato 2000, Part I (Dijon)

\end{thebibliography}

\end{document}